\documentclass[11pt]{amsart}
\usepackage[utf8]{inputenc}
\usepackage[leqno]{amsmath}
\usepackage[a4paper, lmargin=0.10\paperwidth, rmargin=0.10\paperwidth, tmargin=0.10\paperheight, bmargin=0.10\paperheight]{geometry}

\usepackage{amssymb,amscd,amsmath,graphicx,enumerate}
\usepackage{hyperref}
\usepackage{graphicx}
\usepackage{pgf,tikz}
\usepackage{mathtools}

\theoremstyle{definition}
\newtheorem{theorem}{Theorem}[section]
\newtheorem{corollary}{Corollary}[section]
\newtheorem{lemma}{Lemma}[section]
\newtheorem{claim}{Claim}[section]
\newtheorem{proposition}{Proposition}[section]
\newtheorem{example}{Example}[section]
\newtheorem{definition}{Definition}[section]
\newtheorem{remark}{Remark}[section]
\newtheorem{notation}{Notation}[section]
\numberwithin{equation}{section}
\newtheorem{question}{Question}[section]
\newtheorem{con}{Conjecture}[section]

\DeclareMathOperator{\reg}{reg}

\begin{document}

\title{d-sequence edge binomials, and regularity of powers of binomial edge ideals of trees}

\author[Marie Amalore Nambi]{Marie Amalore Nambi}
\address{Department of Mathematics, IIT Hyderabad, Kandi, Sangareddy - 502285}
\email{ma19resch11004@iith.ac.in}

\author[Neeraj Kumar]{Neeraj Kumar}
\address{Department of Mathematics, IIT Hyderabad, Kandi, Sangareddy - 502285}
\email{neeraj@math.iith.ac.in}

\subjclass[2020]{{Primary 13F65, 13D02}; Secondary {05E40}} 
\keywords{Binomial edge ideal, Regularity, Tree, $d$-sequence, Edge binomial, Degree sequence.}
\date{First version September 15, 2022. Revised version, March 1, 2023}

\begin{abstract} 
We provide the necessary and sufficient conditions for the edge-binomials of the tree forming a $d$-sequence in terms of the degree sequence notion of a graph. We study the regularity of powers of the binomial edge ideals of trees generated by $d$-sequence edge binomials.
\end{abstract}

\maketitle

\section{Introduction}

Let $S=k[x_1,\ldots,x_n,y_1,\ldots,y_n]$ be a polynomial ring over a field $k$. Let $G$ be a finite simple graph on $[n]$ vertices. Given an edge $\{i,j\}$ in $G$, associate a polynomial $f_{ij}=x_iy_j-x_jy_i$ in $S$. We call $f_{ij}$ an {\emph{edge binomial}}. Denote by $J_G \subset S$ the binomial edge ideal generated by edge binomials. 

\vspace{1mm}

The binomial edge ideal naturally arises in commutative algebra, algebraic geometry, and statistics. For instance, Herzog et al. noticed that the binomial edge ideal appears in the study of conditional independence ideals suitable to investigate robustness theory in the context of algebraic statistics (cf. \cite{HH}). The notion of binomial edge ideal associated with a finite simple graph was coined by Herzog et al. in \cite{HH} and independently by Ohtani in \cite{O2011}. There is a one-to-one correspondence between the binomial edge ideal of a finite simple graph and set of $2$-minors of $2 \times n$ matrix of indeterminates. Diaconis, Eisenbud, and Sturmfels studied the ideal generated by all the adjacent $2$-minors of a $2 \times n$ generic matrix (cf. \cite{DES}). One can see that the binomial edge ideal of path graphs coincides with the ideal of adjacent minors of a $2 \times n$ matrix of indeterminates. In \cite{DES} the authors also studied corner minors of a $2 \times n$ matrix of indeterminates which coincides with the binomial edge ideal of star graphs.
If $G$ is a complete graph, then $S/J_G$ can be visualized as a Segre variety given by the image of Segre product $\mathbb{P}^1 \times \mathbb{P}^{n-1}$ of projective spaces.

\vspace{1mm}

In \cite{V1995}, Villarreal proved that edge ideals are of linear type if and only if the graph is a tree or has a unique cycle of odd length. The binomial edge ideal of paths (ideal of adjacent minors) is generated by regular sequences (cf. \cite{EHH,R2013}). The binomial edge ideal of star graphs (ideal of corner minors) is generated by $d$-sequence (cf. \cite{JAR}). Since  \emph{ complete intersection ideal} $\implies$ \emph{d-sequence ideal} $\implies$ \emph{ideal of linear type} (cf. \cite{H1980}), the binomial edge ideal of paths and star graphs are of linear type. It is natural to ask for combinatorial characterization of linear type binomial edge ideals. Note that there is a strict inclusion among the family of ideals, that is, \emph{almost complete intersection ideal} $\implies$ \emph{d-sequence ideal} $\implies$ \emph{ideal of linear type} (cf. \cite{H1980, H1982}, \cite[p. 341]{H81}).

\vspace{1mm}

The notion of \emph{d-sequence} was introduced by Huneke in \cite{H1980} (see Definition \ref{Def-d-sequence}). An ideal is said to be generated by $d$-sequence if there is a generating set that forms a $d$-sequence. In an attempt to find linear type binomial edge ideals, Jayanthan et al. in \cite{JAR} characterized graphs whose binomial edge ideals are almost complete intersection ideals, a subfamily of linear type ideals. They proposed the following.

\begin{con}
\cite[Conjecture 4.17]{JAR}\label{conj:LT} If the given graph is a tree or a unicyclic graph, then the binomial edge ideal is of linear type.
\end{con}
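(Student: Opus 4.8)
The plan is to prove the conjecture through the implication \emph{$d$-sequence} $\implies$ \emph{linear type} recalled above: it suffices to exhibit, for each tree (and each unicyclic graph) $G$, an ordering $f_1, \dots, f_r$ of the edge binomials of $G$ that forms a $d$-sequence in the sense of Definition~\ref{Def-d-sequence}. Because the defining property of a $d$-sequence is a chain of colon equalities $(f_1, \dots, f_i) : f_{i+1} f_k = (f_1, \dots, f_i) : f_k$ for $k > i$, the entire problem reduces to controlling colon ideals of binomial edge ideals, and the natural device for this is a leaf-driven ordering of the edges, tuned to the degree sequence of $G$.

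First I would root the tree and list its edges so that the edges incident to a chosen leaf come first, mirroring the two base cases already available: for a path the edge binomials form a regular sequence, and for a star they form a $d$-sequence (cf.\ \cite{EHH,JAR}). The inductive step is to delete a leaf edge $e = \{i,j\}$, with $j$ the leaf, and to relate $J_G : f_e$ to the binomial edge ideal of the subtree $G \setminus e$ together with the local data at the vertex $i$. Since $x_j$ and $y_j$ occur in no other edge binomial, both the colon $(\,\cdot\,) : f_e$ and the products $f_e f_k$ simplify, so the $d$-sequence conditions for $G$ should descend from those for $G \setminus e$ by induction on the number of edges.

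The role of the degree sequence enters precisely at this point. When several edges meet at a vertex of high degree, the relevant colon ideals acquire extra generators arising from the $2$-minors among the corresponding columns of the $2 \times n$ matrix, and whether the clean reduction above survives depends on how the degrees are distributed along the chosen ordering. I therefore expect the $d$-sequence route to succeed exactly for the trees meeting an explicit degree-sequence condition, and to demand real care in ordering the edges so that high-degree vertices are processed compatibly.

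The main obstacle is twofold. First, should some tree admit \emph{no} ordering of its edge binomials that is a $d$-sequence, the conjecture cannot be settled by this route alone; for such a tree one must establish linear type directly, for example by proving that the defining ideal of the Rees algebra $\Rees(J_G)$ is generated by its linear (symmetric) relations, via an $\ell$-exchange or Gr\"obner-basis analysis of the presentation. Second, the unicyclic case is not purely inductive on leaves: after all leaves are stripped one is left with the cycle, whose edge binomials must be treated on their own. Controlling the colon ideals of this cyclic part is where I expect the difficulty to concentrate, and --- in contrast with Villarreal's parity criterion for monomial edge ideals --- the conjecture predicts that the cycle length is irrelevant, so the cyclic computation must be uniform in the parity rather than exploiting it.
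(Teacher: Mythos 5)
There is a genuine gap, and it begins with the status of the statement itself: this is a \emph{conjecture} (Conjecture 4.17 of \cite{JAR}), which the paper does not prove and does not claim to prove --- it only establishes a partial positive answer, namely Theorem \ref{P1.1}, which settles linear type for the restricted family of trees with degree sequence $(1,1)$, $(m,2,\dots,1)$, $(m,1,\dots,1)$, or $(m,3,2,\dots,1)$, $(m,3,1,\dots,1)$ with the degree-$m$ and degree-$3$ vertices adjacent. Your proposal, by contrast, aims at the full conjecture via the route \emph{edge binomials form a $d$-sequence} $\implies$ \emph{linear type}, for every tree and unicyclic graph. This route provably cannot work for general trees: Proposition \ref{Pro:1} shows that if the edge binomials of a tree form a $d$-sequence, the tree must be $P_2$, a $\mathcal{T}_m$ graph, or an $\mathcal{H}_m$ graph, and Corollaries \ref{Cor2.t1} and \ref{Cor2.t2} give explicit obstructions (two vertices of degree $\geq 4$; three vertices of degree $\geq 3$; two vertices of degree $\geq 3$ at distance $\geq 2$). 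The mechanism behind the failure is the intersection property of Lemma \ref{Lemma2.TL}(b): once some colon ideal $J_i : a_{i+1}$ strictly exceeds $J_i$, \emph{every} later edge in the sequence must meet the edge $a_{i+1}$ in a vertex, which is a global constraint on the whole ordering. Your inductive step --- delete a leaf edge $e$ and let the $d$-sequence conditions ``descend'' from $G \setminus e$ --- ignores this global rigidity; the conditions do not descend, because appending a leaf edge at the wrong location violates the intersection property no matter how the remaining edges are ordered.

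You do flag the possibility that some tree admits no $d$-sequence ordering of its edge binomials, but your fallback (establish linear type directly via an $\ell$-exchange or Gr\"obner analysis of the Rees presentation, or find a different minimal generating set forming a $d$-sequence, as happens for the unicyclic graph of \cite[Theorem 4.4]{JAR} whose edge binomials fail to be a $d$-sequence by \cite[Remark 4.12]{JAR}) is left entirely undeveloped, and it is exactly there that the open content of the conjecture lives. The unicyclic half of your plan is likewise speculative: the paper does not treat unicyclic graphs at all, and your observation that the cycle must be handled uniformly in its parity is a statement of the difficulty, not an argument. In short, where your proposal overlaps with the paper (paths, stars, and more generally $\mathcal{T}_m$ and $\mathcal{H}_m$ graphs, handled by explicit orderings and the colon computations of Remarks \ref{Rem1.MCI} and \ref{Rem1.CI}), it is a correct sketch of the same approach; but as a proof of the conjecture it fails, since the $d$-sequence-of-edge-binomials route is shown by the paper itself to be unavailable for all remaining trees, and no substitute argument is supplied.
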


In \cite{A22}, Kumar characterized linear type binomial edge ideal for closed graphs, namely $J_G$ is of linear type if and only if $G$ is a $K_4$-free graph. In this article, we classify all trees (in terms of degree sequence) whose edge binomials form a $d$-sequence. Our first main result is the following.

\vspace{1mm}

\begin{theorem} \label{P1.1}
Let $G$ be a tree on vertices $x_1,\ldots,x_n$. Let $S=k[x_1,\ldots,x_n,y_1,\ldots,y_n]$ be a polynomial ring. Then the
edge binomials of $G$ form a $d$-sequence in $S$ if and only if $G$ has degree sequences of the following form:
\begin{itemize}
\item[(i)] $(1,1)$;
\item[(ii)] $(m,2,\dots, 1)$ or $(m,1,\dots, 1)$, where $2 \leq m < n$;
\item[(iii)] $(m,3,2,\dots,1)$ or $(m,3,1,\dots,1)$ such that the vertex of degree $m$ and $3$ are adjacent in $G$ with $3 \leq m < n$. 
\end{itemize}
Moreover, the binomial edge ideal $J_G$ is of linear type.
\end{theorem}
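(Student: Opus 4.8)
The plan is to prove the characterization by analyzing when the edge binomials $f_{ij}$ of a tree form a $d$-sequence, which I will translate into a condition on the degree sequence of the tree. Recall that a sequence $g_1, \ldots, g_m$ is a $d$-sequence if $(g_1, \ldots, g_{i-1}) \colon g_i g_j = (g_1, \ldots, g_{i-1}) \colon g_j$ for all $i \leq m$ and all $j \geq i$. Since being a $d$-sequence depends on the chosen generating set, I would first argue (using the combinatorial symmetry of trees) that there is a natural ordering of the edges — for instance, a breadth-first or leaf-peeling order — relative to which the $d$-sequence conditions are easiest to verify, and that whether the colon condition holds is governed locally by how high-degree vertices sit relative to one another.

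First I would establish the obstruction direction (the ``only if''). The guiding principle is that the presence of two vertices of high degree that are either non-adjacent or that force a forbidden local configuration breaks the colon condition. Concretely, I would take a tree whose degree sequence is \emph{not} of the listed forms and exhibit explicit edge binomials $f_{ij}, f_{k\ell}$ together with a colon ideal computation showing $(\ldots) \colon f_{ij} f_{k\ell} \neq (\ldots) \colon f_{k\ell}$. The key combinatorial input here is that trees violating (i)--(iii) must contain either two vertices of degree $\geq 3$ at distance $\geq 2$, or a degree-$m$ vertex adjacent to a vertex of degree $\geq 4$, or a vertex of degree $\geq 3$ not adjacent to the maximum-degree vertex; in each case the two corresponding ``stars'' overlap in a way that produces a nontrivial element in the larger colon ideal. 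I expect to reduce these to a small number of minimal forbidden subtrees and then do a direct syzygy/colon computation on each, the computation being local because edge binomials supported on disjoint vertex sets have well-understood colon behavior.

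For the ``if'' direction I would verify directly that each of the families (i), (ii), (iii) yields a $d$-sequence. Case (i) is a single edge, hence trivially a $d$-sequence (indeed a regular sequence). For the star-like cases in (ii), where one vertex has degree $m$ and all others have degree $1$ or a single vertex has degree $2$, this should follow from or closely parallel the result of \cite{JAR} that the binomial edge ideal of a star graph is generated by a $d$-sequence; I would order the edges by the central vertex first and check the colon condition using the standard identities among corner-minor binomials. For the cases in (iii) — a degree-$m$ vertex adjacent to a degree-$3$ vertex — I would pick the ordering that lists the star at the degree-$m$ vertex first, then the remaining edges at the degree-$3$ neighbor, and verify the colon equalities, the adjacency hypothesis being exactly what makes the overlapping colon computation collapse correctly.

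The main obstacle I anticipate is the ``only if'' direction, specifically proving that \emph{every} tree outside the three families fails the $d$-sequence condition \emph{for every} choice of generating set, not merely for the natural edge generators. Since $J_G$ has a canonical minimal generating set (the edge binomials, up to sign and scalar), I would argue that the $d$-sequence property, if it holds for some generating set, must hold for a reordering of the edge binomials themselves; establishing this rigidity — that one need only test orderings of the given binomial generators — is the crux, and I would lean on the fact that these binomials are part of a minimal generating set of a homogeneous ideal so that any $d$-sequence generating set is, up to the allowed linear-algebra operations, a permutation of the $f_{ij}$. Once that reduction is in place, the finitely many forbidden local configurations can be checked, and the final claim that $J_G$ is of linear type then follows immediately from the cited implication \emph{$d$-sequence ideal} $\implies$ \emph{ideal of linear type} (cf. \cite{H1980}).
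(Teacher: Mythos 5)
Your proposal has two genuine gaps, one in each direction. In the ``if'' direction, your proposed ordering is wrong: listing the star at the high-degree vertex \emph{first} and the remaining edges afterwards fails the $d$-sequence condition as soon as any path of positive length is attached. Concretely, take the $\mathcal{T}_3$ tree with center $v$, leaves $u_1,u_2,u_3$, and an extra edge $\{u_1,w\}$, ordered $a_1=f_{vu_1}$, $a_2=f_{vu_2}$, $a_3=f_{vu_3}$, $a_4=f_{u_1w}$. By Remark~\ref{Rem1.MCI}, $(a_1,a_2):a_3 = (a_1,a_2)+\langle f_{u_1u_2}\rangle$, so $f_{u_1u_2}\in (a_1,a_2):a_3a_4$, while $(a_1,a_2):a_4=(a_1,a_2)$ by Remark~\ref{Rem1.CI}; hence $(a_1,a_2):a_3a_4\neq (a_1,a_2):a_4$. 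The paper's verification (Theorem~\ref{Thm2.PtK1m}) depends on the opposite order --- all path edges first, the edges through the center $k_0$ last --- precisely so that every colon jump $\langle f_{kl}\mid \{k,l\}\in N(k_0)\rangle$ occurs only after the remaining generators all pass through $k_0$. A smaller point: you read family (ii) as having at most one vertex of degree $2$, but $\mathcal{T}_m$ allows arbitrarily many (spiders with long legs, e.g.\ degree sequence $(5,2,2,2,1,\ldots,1)$ in Figure~\ref{fig:Tm}), so your verification would not even cover the full families.

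In the ``only if'' direction you have misread the statement and consequently set yourself an obstacle that is both unnecessary and unattainable. The theorem asserts only that no \emph{ordering of the edge binomials} is a $d$-sequence; it says nothing about other generating sets, and indeed the paper's Remark~1.1 recalls a unicyclic graph whose edge binomials form no $d$-sequence although $J_G$ is generated by one --- so your proposed ``rigidity'' reduction (any $d$-sequence generating set is essentially a permutation of the $f_{ij}$) is false as a general principle and should not be the crux of anything. What you \emph{do} need, and what your plan does not deliver, is an argument covering \emph{every ordering} of the edge binomials: exhibiting one bad colon pair for one natural (say leaf-peeling) order, or checking ``minimal forbidden subtrees,'' does not suffice, since the $d$-sequence property is order-sensitive and is not known to pass to induced subtrees. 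The paper's route (Lemma~\ref{Lemma2.TL} and Proposition~\ref{Pro:1}) is structural: assuming an arbitrary ordering $a_1,\ldots,a_n$ is a $d$-sequence, either every colon is trivial (forcing a path, via Remark~\ref{Rem1.CI}), or at the first index $i$ with $J_i:a_{i+1}\neq J_i$ the defining identity $J_i:a_{i+1}a_j=J_i:a_j$ combined with Remark~\ref{Rem1.MCI} and the acyclicity of $G$ forces every later edge to meet $a_{i+1}$, and in fact to pass through one common vertex; a short case analysis of how the initial disjoint paths attach to the resulting star then yields exactly $P_2$, $\mathcal{T}_m$, or $\mathcal{H}_m$. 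Your degree-theoretic obstructions (two far-apart vertices of degree $\geq 3$, etc.) appear in the paper only as corollaries of this structure theorem, not as a substitute for it. The final linear-type claim via Huneke's theorem is the one step you have right.
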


The above theorem is significant for at least two reasons: Firstly, it provides a positive answer to a Conjecture \ref{conj:LT} to some extent; secondly, the $d$-sequence notion plays a vital role in studying the regularity of powers of an ideal. 

\vspace{1mm}

The second part of this article is dedicated to applying $d$-sequence  theory to study the Castelnuovo-Mumford regularity of the binomial edge ideals and their powers. Several authors have studied the regularity of the binomial edge ideal of various classes (see, \cite{VRT2021, JNR, A22, CR20, SK, PS, ZZ} for a partial list). Several researchers have explored the regularity of the product of monomial ideals (for instance, see \cite{APS2019} and the reference of this paper). As part of a technical step, we obtained the regularity of the product of binomial edge ideals of disjoint union of paths and a complete graph in Theorem \ref{Thm3.IJ}. The study of regularity and its combinatorial interpretation for the index of stability is a central research topic in combinatorial commutative algebra. It is well known that the regularity of powers of any homogeneous ideal is a linear function, see \cite{CHT1999, V2000}. It is also known that if an ideal is generated by a $d$-sequence of $t$ forms of the same degree $r$, then for all $s \geq t + 1$, $\reg(I^s)=(s-t-1)r + \reg(I^{t+1}),$ in \cite{CHT1999}. Thus computing the regularity of $I^s$, where $I$ is generated by a $d$-sequence and $s>1$, boils down to computing $\reg(I^{t+1})$. Moreover, if an ideal $I \subset S$ generated by a  $d$-sequence $u_1,\ldots,u_n$, and set $u_0 =0 \in S$, then one has $((u_0,u_1,\ldots,u_{i-1})+I^{s}):u_i = ((u_0,u_1,\ldots,u_{i-1}):u_i) + I^{s-1}$, for $s\geq 1$ and $i = 1,\ldots,n$ (see, \cite[Observation 2.4]{SZ}).  

\vspace{1mm}

A subset $U$ of $V(G)$ is said to be \textit{clique} if $G[U]$ is a complete graph. A vertex $v$ of $G$ is said to be a \textit{free vertex} in $G$ if $v$ is contained in only one maximal clique. A vertex of $G$ is called an \emph{internal vertex} if it is not a free vertex of $G$. Given a tree whose edge binomials form a $d$-sequence, we express the regularity of $J_G^s$ in terms of $s$ and the number of internal vertices of $G$. The second main result is below.

 \vspace{1mm}

\begin{theorem} \label{P1.2}
Let $G$ be a tree on vertices $x_1,\ldots,x_n$. Let $S=k[x_1,\ldots,x_n,y_1,\ldots,y_n]$ be a polynomial ring and let $J_G$ be the binomial edge ideal. Let $i(G)$ denote the number of internal vertices of $G$.
\begin{itemize}
\item[(i)] If $G$ has a degree sequence $(m,2,\dots, 1)$ or $(m,1,\dots, 1)$, then $\reg {S}/{J_{G}^s} = 2s + i(G)-1$, for all $s\geq 1$, and $m \geq 2$.
\item[(ii)] If $G$ has a degree sequence $(m,3,2,\dots,1)$ or $(m,3,1,\dots,1)$ such that the vertex of degree $m$ and $3$ are adjacent in $G$, then  $\reg {S}/{J_{G}^s} = 2s + i(G)-1$, for all $s\geq 1$ and $m \geq 3$.
\end{itemize}
\end{theorem}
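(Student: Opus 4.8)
The plan is to prove both parts uniformly by a double induction on the power $s$ and on the number of edges, driven by the $d$-sequence colon identity recorded in the introduction. I would fix a $d$-sequence ordering $f_1,\dots,f_{n-1}$ of the edge binomials (which exists by Theorem \ref{P1.1}), set $J_i=(f_1,\dots,f_i)$ with $J_0=0$, and use $(J_{i-1}+J_G^{\,s}):f_i=(J_{i-1}:f_i)+J_G^{\,s-1}$. Since $\deg f_i=2$, each step of the filtration yields a short exact sequence
\[
0\longrightarrow \frac{S}{(J_{i-1}:f_i)+J_G^{\,s-1}}(-2)\xrightarrow{\ \cdot f_i\ }\frac{S}{J_{i-1}+J_G^{\,s}}\longrightarrow \frac{S}{J_i+J_G^{\,s}}\longrightarrow 0 .
\]
Telescoping these from $i=1$ (whose middle term is $S/J_G^{\,s}$) to $i=n-1$ (whose right term is $S/J_G$, as $J_{n-1}=J_G\supseteq J_G^{\,s}$) reduces the computation of $\reg S/J_G^{\,s}$ to the regularities of $S/J_G$ and of the ``correction'' modules $S/((J_{i-1}:f_i)+J_G^{\,s-1})$. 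Note already that the $i=1$ correction term is $S/((0:f_1)+J_G^{\,s-1})=S/J_G^{\,s-1}$ (using that $S$ is a domain), which by the induction hypothesis has regularity $2(s-1)+i(G)-1$, so its shift by $-2$ contributes exactly the target value $2s+i(G)-1$.

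First I would settle the base case $s=1$, showing $\reg S/J_G=i(G)+1$ for each admissible degree sequence. Here I would argue that for these trees $i(G)+1$ equals the length $\ell(G)$ of the longest induced path, so the lower bound $\reg S/J_G\ge \ell(G)$ is automatic, and the matching upper bound follows from the same filtration (with $s=1$) together with the explicit, small structure of the star, the ``spider'' $(m,2,\dots)$, and the type (iii) trees.

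The heart of the argument is the \emph{Key Claim} that for $2\le i\le n-1$ one has $\reg S/((J_{i-1}:f_i)+J_G^{\,s-1})\le 2(s-1)+i(G)-1$. To prove it I would compute the colon ideals $J_{i-1}:f_i$ combinatorially: for the $d$-sequence ordering adapted to the tree, $J_{i-1}:f_i$ is $J_{i-1}$ enlarged by a collection of variables (and lower binomials) that, upon passing to the quotient, turns the common neighbours of the relevant edge into a clique while severing the remaining legs into disjoint paths. Consequently $(J_{i-1}:f_i)+J_G^{\,s-1}$ becomes, modulo a linear regular sequence, the $(s-1)$-st power of (equivalently, a product governing) the binomial edge ideal of a disjoint union of paths together with a complete graph, whose regularity is exactly what Theorem \ref{Thm3.IJ} supplies. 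Feeding this into the induction on $s$ yields the claimed bound.

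Finally I would assemble the pieces for $s\ge 2$. Since every correction term with $i\ge 2$ is bounded by $2s+i(G)-3$ by the Key Claim, and $\reg S/J_G=i(G)+1\le 2s+i(G)-3$, all terms of the telescope except the $i=1$ one are strictly smaller than $2s+i(G)-1$. The standard additivity estimates for regularity along the short exact sequences then force $\reg S/J_G^{\,s}\le 2s+i(G)-1$, while the $i=1$ sequence $0\to S/J_G^{\,s-1}(-2)\to S/J_G^{\,s}\to S/(J_1+J_G^{\,s})\to 0$, in which the right-hand term has regularity at most $2s+i(G)-3$, forces the reverse inequality $\reg S/J_G^{\,s}\ge 2s+i(G)-1$; this is where having the other terms \emph{strictly} below the target is essential. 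This closes the induction and proves (i) and (ii) simultaneously. I expect the main obstacle to be the Key Claim, specifically the exact combinatorial identification of the colon ideals $J_{i-1}:f_i$ for each tree type and the verification that the resulting quotient is precisely of the path-plus-clique form to which Theorem \ref{Thm3.IJ} applies.
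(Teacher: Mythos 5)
Your skeleton — filtering by a $d$-sequence ordering, the colon identity of Lemma \ref{Lemma1.PD}, the Mohammadi--Sharifan description of the colon ideals, and Theorem \ref{Thm3.IJ} as the external input — is the same as the paper's, but the final assembly contains a genuine gap in the lower bound. You assert that in the $i=1$ sequence
\begin{equation*}
0\longrightarrow \frac{S}{J_G^{\,s-1}}(-2)\longrightarrow \frac{S}{J_G^{\,s}}\longrightarrow \frac{S}{J_1+J_G^{\,s}}\longrightarrow 0
\end{equation*}
the right-hand term has regularity at most $2s+i(G)-3$, on the grounds that ``all terms of the telescope except the $i=1$ one are strictly smaller than $2s+i(G)-1$.'' This is false, and the slip comes from dropping the $(-2)$ twist: your Key Claim bounds the \emph{untwisted} correction modules by $2(s-1)+i(G)-1=2s+i(G)-3$, so after the twist each left-hand term of the telescope is bounded only by $2s+i(G)-1$, and Lemma \ref{Lemma1.Reg}(a) then yields merely $\reg S/(J_1+J_G^{\,s})\le 2s+i(G)-1$ --- a bound which is in fact attained. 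Concretely, for $G=P_3$ with $f_1=f_{12}$ and $s=2$ one has $J_1+J_G^2=(f_{12},f_{23}^2)$, a complete intersection of degrees $2$ and $4$, so $\reg S/(J_1+J_G^2)=4=2s+i(G)-1$, not $\le 2=2s+i(G)-3$. With $\reg S/(J_1+J_G^{\,s})$ possibly equal to $2s+i(G)-2$ as far as your bounds know, the equality case of Lemma \ref{Lemma1.Reg}(b) cannot be invoked, and the inequality $\reg S/J_G^{\,s}\ge 2s+i(G)-1$ does not follow. Indeed, Theorems \ref{Thm4.T1} and \ref{Thm4.T2} show that \emph{every} intermediate module $S/((d_1,\ldots,d_i)+J_G^{\,s})$ has regularity exactly $2s+i(G)-1$, so no term of your telescope sits strictly below the target; a proof by one-sided bounds along this filtration is structurally impossible.

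The missing idea — and the point where the paper diverges from your plan — is that the growth $2s$ must be injected at the \emph{other} end of the filtration. At $i=n-1$ one has $(d_1,\ldots,d_{n-1})+J_G^{\,s}=(d_1,\ldots,d_{n-1})+(d_n^{\,s})$, and the $d$-sequence property gives $(d_1,\ldots,d_{n-1}):d_n^{\,s}=(d_1,\ldots,d_{n-1}):d_n$, so the relevant exact sequence carries a $(-2s)$ twist; the twisted colon module then has regularity $2s+1+\sum_j s_{(j)}$, strictly above $\reg S/(d_1,\ldots,d_{n-1})\le 2+\sum_j s_{(j)}$, so Lemma \ref{Lemma1.Reg}(c) applies in its equality case and produces the \emph{exact} value $2s+\sum_j s_{(j)}$ (Lemmas \ref{Lemma4.T1s=2}, \ref{Lemma4.T1p2}, \ref{Lemma4.T2dn}). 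The paper then propagates exact equalities, not bounds, by descending induction on $i$, each step using either the induction on $s$ (when the colon adds nothing) or the Mayer--Vietoris sequence for $I+J_{K_n}$ together with the computation $I\cap J_{K_n}=J_{K_n}\langle x_{k_0},y_{k_0},J_{\mathcal{P}}\rangle$ and Theorem \ref{Thm3.IJ}; your Key Claim's numerical bound matches this Case 2 value, but the correction module is the sum $I+J_{K_n}$ handled this way, not a power of a path-plus-clique ideal modulo linear forms. Two smaller points: your base-case assertion that $i(G)+1$ equals the longest induced path length already fails for the spider with three legs of length two (there $i(G)+1=5$ while the longest induced path has length $4$), though the $s=1$ values themselves are citable (Remark \ref{rem1.Reg}(c), Lemma \ref{Thm3.T2}); and your upper-bound telescoping for $s\ge 2$ is fine as written. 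Without the $i=n-1$ anchor, however, your induction has no source for the lower bound, so the argument does not prove the theorem.
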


Note that the stability index is one for each case in the above theorem. In the case of binomial edge ideals,  a few results are known in the literature about the  $\reg J_{G}^s$ for $s > 1$. To our knowledge, some results known in this direction are the following. In \cite{JAR20}, the authors have computed the regularity of powers of binomial edge ideals of cycles and star graphs and obtained the bound for trees and unicycle whose $J_G$ is an almost complete intersection ideal. In \cite{SZ}, the authors obtained the precise bound for the regularity of $J_G^s$ for an almost complete intersection ideal. Ene et al. studied the regularity of $J_G^s$ for closed graphs, in \cite{VRT2021}. We expect that the techniques developed in this article will stimulate further research interest among combinatorial commutative algebraists to study the regularity of powers of the ideal.

\vspace{2mm}

This paper is organized as follows. The following section recalls definitions, notations, and known results. Section 3 characterizes trees, where edge binomials form a $d$-sequence. In Section 4, we compute the regularity of the product of binomial edge ideals of a disjoint union of paths and a complete graph. Section 5 studies the regularity of powers of binomial edge ideals of trees whose edge binomials form a $d$-sequence.

 \vspace{2mm}

 \noindent {\bf Acknowledgement.} 
 The first author is financially supported by the University Grant Commission, India. The second author is partially supported by the Mathematical Research Impact Centric Support (MATRICS) grant from Science and Engineering Research Board (SERB), India. 
 
\section{Preliminaries}

\subsection*{Basic notions from graph theory.}  Let $G$ be a simple graph on the vertex set $V(G) = [n]$ and edge set $E(G)$. The \textit{degree} of a vertex $v \in V(G)$, denoted by $deg_{G}(v)$, is the number of edges incident to $v$. A \emph{degree sequence} of a finite simple graph is a non-increasing sequence of its vertex degrees. The complete graph on $[n]$ is denoted by $K_n$.  Let $C_n$ (and $P_n$) denotes the cycle (path) on $[n]$ vertices respectively. The \textit{length} of a path or cycle is its number of edges. A graph with precisely one cycle as a subgraph is called a \textit{unicyclic graph}. If a connected graph does not contain a cycle, it is called a \textit{tree}. A \textit{star graph} on $m+1$ vertices, denoted by $K_{1,m}$, is the graph with vertex set $V(K_{1,m})=\{v\} \sqcup \{u_1,\ldots,u_m\}$ and edge set $E(K_{1,m})= \{\{v,u_i\} \mid 1\leq i\leq m\}$, and we call the vertex $v$ the center of $K_{1,m}$. A vertex $v$ is said to be a \textit{pendant vertex} if $\deg_{G}(v) = 1$. The \textit{distance} between two vertices in a connected graph is the shortest length of a path between two vertices.

\vspace{1mm}

\begin{definition}\cite[Definition 1.1]{H1982} \label{Def-d-sequence} Let $S$ be a commutative ring. Set $a_0 = 0$. A sequence of elements $a_1,\ldots,a_m$ in $S$ is said to be a $d$-sequence if it satisfies the two conditions: $a_1,\ldots,a_m$ is a minimal system of generators of the ideal $I = (a_1,\ldots,a_m)$; and $( ( a_0,\ldots,a_i ) : a_{i+1}a_j) = (( a_0,\ldots,a_i ) : a_j)$ for all $0 \leq i \leq m-1$, and $j \geq i+1$. 
\end{definition}

\vspace{1mm}

\begin{remark} 
 
There are graphs for which edge binomials do not form a $d$-sequence, but some other minimal generating sets of the binomial edge ideal $J_G$ form a $d$-sequence in $S$. For instance, let $G$ be a unicyclic graph obtained by attaching a path to each vertex of $C_3$. Then $J_G$ is generated by $d$-sequence \cite[Theorem 4.4]{JAR}. However, the edge binomials of $G$ do not form a $d$-sequence \cite[Remark 4.12]{JAR}.
\end{remark}

\begin{notation} Let $G$ be a simple graph on $[n]$. For an edge $e'$ in $G$, $G\setminus e'$ is the graph on the vertex set $V(G)$ and edge set $E(G)\setminus {e'}$. An edge $e'$ is called a \emph{bridge} if $c(G) < c(G \setminus e')$, where $c(G)$ is the number of components of $G$. For a vertex $v$, 
$$N_{G}(v) = \{u \in V(G) \mid \{u,v\} \in E(G)\}$$ 
denotes the \emph{neighborhood} of $v$ in $G$. Let $e =\{i,j\} \notin E(G)$ be an edge in $G \cup \{e\}$. Then $G_e$ (cf. \cite[Definition 3.1]{MS}) is the graph on vertex set $V(G)$ and edge set $$E(G_e) = E(G) \cup \left\{ \{k,l\} : k,l \in N_{G}(i) \textnormal{ or } k,l \in N_{G}(j) \right\}.$$
\end{notation}

We recall some results from the literature relevant to this article. Due to Mohammadi and Sharifan, the following results describe the colon ideal operation on the binomial edge ideal.

\begin{remark}\label{Rem1.MCI} 
\cite[Theorem 3.4]{MS} Let $G$ be a simple graph. Let $e=\{i,j\} \notin E(G)$ be a bridge in $G \cup \{e\}$. Then $J_G: f_e = J_{G_e}$. 
\end{remark}

\begin{remark}\label{Rem1.CI}
Note that for a simple graph $G$, if  $e=\{i,j\} \notin E(G) $ with $1 \leq \deg_{G\cup e}(i), \deg_{G \cup e}(j) \leq 2$ then $G_e = G$, since $\mid N_{G}(i)\mid \leq 1$ and  $\mid N_{G}(j)\mid \leq 1$.
\end{remark}

\begin{lemma} \label{Lemma1.PD}
\cite[Observation 2.4]{SZ}
Suppose that $u_1,\ldots,u_n$ form a $d$-sequence in $S$ and $I = ( u_1,\ldots,u_n )$ is the ideal in $S$. Set $u_0 = 0$. Then, one has 
\[((u_0,u_1,\ldots,u_{i-1})+I^{s}):u_i = ((u_0,u_1,\ldots,u_{i-1}):u_i) + I^{s-1},\]
for $s\geq 1$ and $i = 1,\ldots,n$. 
\end{lemma}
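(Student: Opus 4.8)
The plan is to prove the two inclusions separately and to reduce the nontrivial one to a colon identity characteristic of $d$-sequences. Throughout write $J_i = \langle u_0,u_1,\ldots,u_{i-1}\rangle = \langle u_1,\ldots,u_{i-1}\rangle$. The inclusion $\supseteq$ is immediate: $J_i : u_i \subseteq (J_i+I^s):u_i$ since $J_i\subseteq J_i+I^s$, while $u_iI^{s-1}\subseteq I^s\subseteq J_i+I^s$ gives $I^{s-1}\subseteq (J_i+I^s):u_i$. For the base case $s=1$ both sides equal $S$, because $u_i\in I$ forces $(J_i+I):u_i=S$ and $I^0=S$. So everything rests on the inclusion $\subseteq$ for $s\ge 2$, which I would establish by induction on $s$.

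First I would record the colon identity that powers the whole argument. Taking the defining relation of a $d$-sequence at level $i-1$ gives $J_i : u_iu_j = J_i : u_j$ for every $j\ge i$. Hence if $z\in J_i:u_i$, then $zu_iu_j\in J_i$, so $z\in J_i:u_iu_j = J_i:u_j$; combined with $J_i:u_j=S$ for $j<i$ this yields
\[
J_i : u_i = J_i : u_j \text{ for all } j, \qquad\text{equivalently}\qquad J_i : u_i = J_i : I .
\]
This is the one place where the full force of the hypothesis enters at the formal level, and it is the substitute for ``$u_i$ is a nonzerodivisor modulo $J_i$''. For the inductive step, take $f$ with $fu_i\in J_i+I^s$. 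Since $I^s\subseteq I^{s-1}$, the inductive hypothesis gives $f\in (J_i:u_i)+I^{s-2}$; writing $f=p+h$ with $p\in J_i:u_i$ and $h\in I^{s-2}$, and using $pu_i\in J_i$, I reduce to the case $f=h\in I^{s-2}$ with $hu_i\in J_i+I^s$, the goal now being $h\in (J_i:u_i)+I^{s-1}$. Next I would split $I^s = u_iI^{s-1} + (\text{products of } s \text{ generators not involving } u_i)$ and observe that any such product containing some $u_l$ with $l<i$ lies in $J_i$. Subtracting an element of $I^{s-1}$ coming from the $u_iI^{s-1}$ part and moving the $J_i$-terms aside, the problem collapses to showing that $hu_i\in J_i+L^s$ forces $h\in (J_i:u_i)+I^{s-1}$, where $L=\langle u_{i+1},\ldots,u_n\rangle$.

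The crux, and the step I expect to be the main obstacle, is this reduced statement, which is an intersection property rather than a formal colon manipulation. Here I would pass to the quotient $\overline{S}=S/J_i$. An initial segment of a $d$-sequence may be factored out: the tail $\overline{u_i},\ldots,\overline{u_n}$ is again a $d$-sequence in $\overline{S}$, since its defining relations form a sub-collection of those for $u_1,\ldots,u_n$ and colons pass to the quotient because $J_i$ lies inside every ideal in sight. Under this reduction $\overline{u_i}$ becomes the leading element and $L^s$ maps into $\overline{I}^s$, so it suffices to prove, for the first element of a $d$-sequence $u_1,\ldots,u_m$ generating $I$, the leading-element identity $I^s:u_1 \subseteq I^{s-1}+(0:u_1)$, equivalently $\langle u_1\rangle\cap I^s = u_1I^{s-1}$.

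This leading-element identity is the genuine content, and it does not follow from the first-order colon relation alone; all of my attempted reformulations of it are equivalent to itself, so it must be proved by a secondary induction that exploits the $d$-sequence relations more deeply. I would run an induction on the number of generators $m$, with the single-generator case $(u_1^{s}):u_1 = (u_1^{s-1})+(0:u_1)$ as base, together with an inner induction on $s$, feeding in the colon identity $\langle u_1,\ldots,u_{k-1}\rangle:u_k = \langle u_1,\ldots,u_{k-1}\rangle:I$ at each stage to clear the newly adjoined generator $u_m$ when comparing $I^s$ with the power of $\langle u_1,\ldots,u_{m-1}\rangle$. The bookkeeping in this secondary induction, namely keeping track of how $u_m^t$-multiples of lower powers contribute to $\langle u_1\rangle\cap I^s$, is the only delicate part of the proof; everything preceding it is the formal scaffolding described above.
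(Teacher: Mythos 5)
Your scaffolding is sound, but the proof has a genuine gap at exactly the point you flag yourself. The reductions you perform are all correct: the inclusion $\supseteq$, the base case $s=1$, the induction on $s$ reducing to $h\in I^{s-2}$, the decomposition $J_i+I^s = J_i + u_iI^{s-1} + L^s$ with $L=\langle u_{i+1},\ldots,u_n\rangle$, and the passage to $\overline{S}=S/J_i$, where the tail of a $d$-sequence is again a $d$-sequence. (One local slip: ``$J_i:u_i = J_i:u_j$ for all $j$'' is false as stated, since $J_i:u_j=S$ for $j<i$; what your argument actually shows is $J_i:u_i\subseteq J_i:u_j$ for $j\ge i$, hence the correct aggregate identity $J_i:u_i=J_i:I$, which is also what you use.) But after these reductions everything rests on the leading-element identity $I^s:u_1 = (0:u_1)+I^{s-1}$, equivalently $\langle u_1\rangle\cap I^s = u_1I^{s-1}$, and at this point you stop: you state that it ``must be proved by a secondary induction'' and describe only the shape of that induction, conceding that ``the bookkeeping\ldots is the only delicate part.'' That bookkeeping is not a routine verification --- it is the central theorem of $d$-sequence theory (Huneke's theorem on powers of ideals generated by $d$-sequences, \cite{H1982}, the same machinery that yields linear type), and its proof is a genuinely delicate double induction with auxiliary intersection lemmas, not a formal consequence of the colon relations you have already extracted. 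An unexecuted plan for it is not a proof; as it stands your argument proves the lemma only modulo the one statement that carries all the content.

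For context: the paper itself offers no proof of this lemma either --- it is imported verbatim as \cite[Observation 2.4]{SZ}, which in turn rests on the Huneke-style theory you reduced to. So your blind reduction correctly identifies where the real difficulty lives, and that is worth something; but to complete the argument you must either carry out the double induction on the number of generators and on $s$ in full (essentially reproving \cite{H1982}) or cite the known theorem, in which case the reduction you built, while instructive, can be bypassed entirely.
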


\begin{definition}
Let $S$ be a standard graded polynomial ring over a field $k$. Let $M$ be a finitely generated graded $S$-module. Let $\mathbf{F}_{\bullet}$ be a minimal graded $S$-free resolution of $M$:
$$\mathbf{F}_{\bullet}:  F_{n} \stackrel{\phi_{n}} \longrightarrow  F_{n-1} \longrightarrow  \cdots \longrightarrow  F_1 \stackrel{\phi_{1}} \longrightarrow F_0\longrightarrow  0.$$
Here, $F_i = \oplus_{j}S(-j)^{\beta_{i,j}}$, where $S(-j)$ denotes the graded free module of rank $1$ obtained by shifting the degrees in $S$ by $j$, and $\beta_{i,j}$ denotes the $(i,j)$-th graded Betti number of $M$ over $S$. The \emph{Castelnuovo-Mumford regularity} or simply  \emph{regularity} of $M$ over $S$, denoted by $\reg_S M$, is defined as
$$\reg_SM \coloneqq \max \{j-i \mid \beta_{i,j} \neq 0\}. $$
\end{definition}
For convenience, we shall use $\reg M$ instead of $\reg_SM$. We state the following regularity lemma \cite[Corollary 20.19]{Eisenbud}.

\begin{lemma}[Regularity lemma] \label{Lemma1.Reg}
Let $ 0 \rightarrow M \rightarrow N \rightarrow P   \rightarrow  0$ be a short exact sequence of finitely generated graded $S$-modules. Then the following holds.
\begin{enumerate}[(a)]
    \item $\reg{N} \leq \max \{\reg M, \reg P\}$. The equality holds if $\reg M \neq \reg P +1$.
    \item $\reg{M} \leq \max \{\reg N, \reg P+1\}$. The equality holds if $\reg N \neq \reg P$. 
    \item $\reg{P} \leq \max \{\reg M -1, \reg N\}$. The equality holds if $\reg M \neq \reg N$. 
\end{enumerate}
\end{lemma}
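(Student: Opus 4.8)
The plan is to prove the Regularity Lemma directly from the Betti-number description of regularity, $\reg_S X = \max\{\, j-i : \Tor_i^S(k,X)_j \neq 0 \,\}$, by feeding the short exact sequence into the long exact sequence of $\Tor$. The sequence $0 \to M \to N \to P \to 0$ induces, in each fixed internal degree $j$, a long exact sequence of graded $k$-vector spaces
\[
\cdots \to \Tor_{i+1}(P,k)_j \xrightarrow{\partial} \Tor_i(M,k)_j \to \Tor_i(N,k)_j \to \Tor_i(P,k)_j \xrightarrow{\partial} \Tor_{i-1}(M,k)_j \to \cdots
\]
in which all maps preserve $j$. Each assertion will be read off from the three consecutive terms centred on the module being estimated. (One may run the identical argument with local cohomology $H^\bullet_{\mathfrak m}$ replacing $\Tor$, which is Eisenbud's route; the bookkeeping is the same.)

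First I would record the three inequalities, which are immediate. For (a), exactness of $\Tor_i(M,k)_j \to \Tor_i(N,k)_j \to \Tor_i(P,k)_j$ shows that a nonzero middle term forces one of the outer terms to be nonzero, so $j-i \le \max\{\reg M,\reg P\}$; maximizing over $(i,j)$ gives $\reg N \le \max\{\reg M,\reg P\}$. For (b), a nonzero $\Tor_i(M,k)_j$ either maps nontrivially to $\Tor_i(N,k)_j$ (whence $j-i \le \reg N$) or lies in the image of $\partial$ from $\Tor_{i+1}(P,k)_j$ (whence $j-(i+1) \le \reg P$, i.e.\ $j-i \le \reg P+1$), giving $\reg M \le \max\{\reg N, \reg P+1\}$. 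Part (c) is the same computation shifted one step to the right: a nonzero $\Tor_i(P,k)_j$ either comes from $\Tor_i(N,k)_j$ or injects via $\partial$ into $\Tor_{i-1}(M,k)_j$, so $\reg P \le \max\{\reg N,\reg M-1\}$. Thus the three bounds are just the three rotations of the same three-term exactness.

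The substantive part, and the step I expect to be the main obstacle, is the equality clauses: one must show that the top-degree Betti class of the predicted maximizer actually survives into the target module rather than being annihilated by a connecting map, and that this survival is guaranteed precisely by the stated numerical hypothesis. I would argue this by a short zig-zag. For (b), set $r=\max\{\reg N,\reg P+1\}$ and split on the comparison of $\reg N$ and $\reg P$ (which is why the hypothesis reads $\reg N \neq \reg P$). If $\reg N > \reg P$ then $r=\reg N$; choosing $(i,j)$ with $j-i=\reg N$ and $\Tor_i(N,k)_j \neq 0$, the neighbour $\Tor_i(P,k)_j$ vanishes because $j-i > \reg P$, so $\Tor_i(M,k)_j \to \Tor_i(N,k)_j$ is surjective and $\Tor_i(M,k)_j \neq 0$, giving $\reg M \ge r$. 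If instead $\reg N < \reg P$ then $r=\reg P+1$; choosing $(i,j)$ with $j-i=\reg P$ and $\Tor_i(P,k)_j \neq 0$, the neighbour $\Tor_i(N,k)_j$ vanishes since $j-i > \reg N$, so $\partial\colon \Tor_i(P,k)_j \to \Tor_{i-1}(M,k)_j$ is injective and $\Tor_{i-1}(M,k)_j \neq 0$, giving $\reg M \ge j-(i-1)=\reg P+1=r$. The excluded case $\reg N = \reg P$ is exactly the one in which both neighbours can be nonzero and the surviving class may be cancelled, so the hypothesis is sharp.

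The equality clauses of (a) and (c) are handled by the identical device: pick a pair $(i,j)$ realizing the asserted maximum on one of $M,N,P$, use the numerical hypothesis ($\reg M \neq \reg P+1$ for (a), $\reg M \neq \reg N$ for (c)) to force the single neighbouring $\Tor$ that could absorb the class to vanish by a strict inequality of regularities, and conclude by exactness that the class persists in the target. The only care needed is to track the homological degree shifts $\pm 1$ carried by $\partial$, so that the correct strict inequality is the one killing the neighbour. No deeper input is required: the whole lemma is a formal consequence of the long exact $\Tor$ sequence together with the Betti-number description of $\reg$.
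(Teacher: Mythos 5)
The paper does not prove this lemma at all: it quotes it verbatim from Eisenbud [Corollary 20.19], so your write-up supplies a proof where the paper gives only a citation, and the proof you give -- the graded long exact sequence of $\Tor(-,k)$ plus the Betti-number description $\reg M = \max\{j-i \mid \Tor_i(M,k)_j \neq 0\}$ -- is the standard argument and is correct. The three inequalities follow from three-term exactness exactly as you say, and in each equality clause the stated numerical hypothesis annihilates precisely the one neighbouring $\Tor$ module that could absorb the extremal class: the two case splits you left implicit for (a) (on $\reg M$ versus $\reg P+1$) and for (c) (on $\reg M$ versus $\reg N$) go through verbatim by the same zig-zag you carried out in full for (b), with the homological shift of the connecting map accounting for the $\pm 1$ in each hypothesis.
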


\begin{remark}\label{Rem1.GUG} 
Let $G$ be a new graph obtained by gluing finitely many graphs at free vertices. Let $G=H_1 \cup\cdots\cup H_k$ be a graph satisfying the properties:
\begin{enumerate}
  \item[(a)] For $i \neq j, $ if $H_i \cap H_j \neq  \emptyset $, then $H_i \cap H_j = \{v_{ij}\}$, for some vertex $v_{ij}$ which is free vertex in $H_i$ as well as $H_j$;
  \item[(b)] For distinct $i,j,k, H_i \cap H_j \cap H_k =  \emptyset. $
\end{enumerate}
Then $\reg S/J_G  = \sum_{i=1}^{k} \reg S/J_{H_{i}}$ \cite[Corollary 3.2]{JNR}.
\end{remark}

\begin{remark}\label{rem1.Reg}
Let $G$ be a finite simple graph and $J_G$ be its binomial edge ideal in $S$.
\begin{enumerate}[(a)]
    \item \cite[Theorem 2.1]{SK} If $G=K_n$, then $\reg (S/J_{G}) = 1$, for any $n \geq 3$.
    \item \cite[Theorem 27]{ZZ} \cite[Corollary 3.4]{JNR} Let $i(G)$ be the number of internal vertices of  $G$. If $G$ has a degree sequence either $(m,2,\ldots,1)$ or $(m,1,\ldots,1)$, then $\reg S/J_{G}= i(G)+1$.
\end{enumerate}
\end{remark}

A graph $G$ is called a \emph{block graph} if every block of $G$ is a complete graph. A \emph{flower graph} (cf. \cite[Definition 3.1]{CR20}) $F_{h,k}(v)$ is a connected block graph constructed by joining $h$ copies of the cycle graph $C_3$ and $k$ copies of the bipartite graph $K_{1,3}$ with a common vertex $v$, where $v$ is one of the free vertices of $C_3$ and of $K_{1,3}$, and $cdeg(v) \geq 3$. If $G$ has no ﬂower graphs as induced subgraphs, then $G$ is called flower-free.

\begin{remark} \label{Rem.FF}
    \cite[Corollary 3.2]{CR20} Let $G$ be a connected block graph that does not have an isolated vertex. If $G$ is a flower-free graph, then $\reg S/J_G = i(G) + 1$, where $i(G)$ is the number of internal vertices of $G$.
\end{remark}

\begin{remark} \label{Rem.CIpath}
    \cite[Theorem 1]{R2013} Let $G$ be a graph. Then $J_G$ is a complete intersection if and only if each component of $G$ is a path.
\end{remark}

\section{\textit{d}-sequence edge binomials}

\vspace{2mm}

This section characterizes trees for which edge binomials form a $d$-sequence. 

\vspace{2mm}

The following lemma would describe a characterization for a tree if the corresponding edge binomials of the tree were to form a $d$-sequence. This characterization will play an important role in exploring the colon ideal operation of edge binomials in the form of graph deformation.

\begin{lemma}[Technical Lemma]\label{Lemma2.TL}
Let $G$ be a tree on $[n+1]$. Assume that $a_1,\ldots,a_{n}$ are  $d$-sequence edge binomials of $G$, where $a_k$ corresponds to an edge $\{\alpha_{a_{k}},\beta_{a_{k}}\} \in E(G)$. Let $J_{i}$ denote the ideal generated by $( a_1,\ldots,a_i )$. Let $H_i$ denote the graph associated with $a_1,\ldots,a_i$. 
\begin{enumerate}[(a)]
    \item If $J_i:a_{i+1} = J_i$ for all $i \geq 1$, then edge binomials $a_1,\ldots,a_{n}$ correspond to a path.
    \item If there exists a smallest integer $i$ such that $J_i:a_{i+1} \neq J_i$, then $\{\alpha_{a_{i+1}},\beta_{a_{i+1}}\} \cap \{\alpha_{a_{j}},\beta_{a_{j}}\} \neq \emptyset$, for all $j>i+1$. In particular, for all $j>i+1$, one has $\{\alpha_{a_{i+1}}\} \cap \{\alpha_{a_{j}},\beta_{a_{j}}\} \neq \emptyset$ or $\{\beta_{a_{i+1}}\} \cap \{\alpha_{a_{j}},\beta_{a_{j}}\} \neq \emptyset$. (Intersection property).
\end{enumerate}
\end{lemma}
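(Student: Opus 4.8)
The plan is to convert every colon by an edge binomial into a graph deformation and then feed the resulting membership statements into the defining identity of a $d$-sequence. The starting observation is that for each $i$ the graph $H_i\cup\{e_{i+1}\}$, with $e_{i+1}=\{\alpha_{a_{i+1}},\beta_{a_{i+1}}\}$, is a subforest of the tree $G$; hence the unique $\alpha_{a_{i+1}}$--$\beta_{a_{i+1}}$ path in $G$ is the edge $e_{i+1}$ itself, so $e_{i+1}$ is a bridge in $H_i\cup\{e_{i+1}\}$ and Remark~\ref{Rem1.MCI} gives $J_i:a_{i+1}=J_{(H_i)_{e_{i+1}}}$. Since $H_i$ is a forest, no two neighbours of a vertex are adjacent, so (using Remark~\ref{Rem1.CI}) one has $J_i:a_{i+1}=J_i$ if and only if both endpoints of $e_{i+1}$ have at most one neighbour in $H_i$. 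This single equivalence drives both parts.

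For part (a): if $J_i:a_{i+1}=J_i$ for every $i$, then whenever an edge is added each of its endpoints has degree at most $1$ in the current graph. If some $v$ had $\deg_G(v)\ge 3$, consider the third edge incident to $v$ in the order of the sequence; just before it is added $v$ already carries two edges, contradicting the degree-$\le 1$ condition at that endpoint. Thus $G$ has maximum degree at most $2$, and a tree of maximum degree $2$ is a path.

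For part (b): fix the smallest $i$ with $J_i:a_{i+1}\ne J_i$. By the equivalence above one endpoint of $e_{i+1}$, say $\alpha:=\alpha_{a_{i+1}}$, has two neighbours $p,q$ in $H_i$, and these produce a new edge in the deformation, so $f_{pq}\in J_{(H_i)_{e_{i+1}}}=J_i:a_{i+1}$, i.e. $f_{pq}\,a_{i+1}\in J_i$. For any $j>i+1$, multiplying by $a_j$ gives $f_{pq}\in J_i:(a_{i+1}a_j)$, and the defining identity of a $d$-sequence (Definition~\ref{Def-d-sequence}) yields $J_i:(a_{i+1}a_j)=J_i:a_j$. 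Therefore $f_{pq}\,a_j\in J_i=J_{H_i}$ for every $j>i+1$.

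It remains to show that this forces $a_j$ to meet $e_{i+1}$, and this non-membership step is the main obstacle. I argue by contradiction: suppose $\{\gamma,\delta\}:=\{\alpha_{a_j},\beta_{a_j}\}$ is disjoint from $\{\alpha_{a_{i+1}},\beta_{a_{i+1}}\}$. Consider the prime $P_{\{\alpha\}}(H_i)=(x_\alpha,y_\alpha)+\sum_t J_{\widetilde{C_t}}$, where the $C_t$ are the connected components of $H_i\setminus\alpha$ and $\widetilde{C_t}$ is the complete graph on $C_t$ (cf.\ \cite{HH}); checking generators edge by edge shows $J_{H_i}\subseteq P_{\{\alpha\}}(H_i)$. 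Removing the cut vertex $\alpha$ separates its neighbours $p,q$ in the forest $H_i$, so $f_{pq}\notin P_{\{\alpha\}}(H_i)$. Moreover $\{\gamma,\delta\}\in E(G)\setminus E(H_i)$ cannot be joined by a path inside $H_i$, since such a path together with the edge $\{\gamma,\delta\}$ would create a cycle in the tree $G$; hence $\gamma,\delta$ lie in distinct components of $H_i$, and therefore of $H_i\setminus\alpha$, giving $f_{\gamma\delta}\notin P_{\{\alpha\}}(H_i)$. As $P_{\{\alpha\}}(H_i)$ is prime, $f_{pq}\,a_j=f_{pq}f_{\gamma\delta}\notin P_{\{\alpha\}}(H_i)\supseteq J_{H_i}$, contradicting the conclusion of the previous paragraph. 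Hence $\{\alpha_{a_j},\beta_{a_j}\}\cap\{\alpha_{a_{i+1}},\beta_{a_{i+1}}\}\ne\emptyset$ for all $j>i+1$, and the shared vertex is necessarily $\alpha_{a_{i+1}}$ or $\beta_{a_{i+1}}$, which is the ``in particular'' clause. The delicate point throughout is this non-membership: selecting the branching (cut) vertex $\alpha$ as the separating set is precisely what makes both binomial factors escape a single prime containing $J_{H_i}$.
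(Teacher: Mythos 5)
Your proposal is correct, and much of it tracks the paper exactly: part (a) is the same argument (Remark \ref{Rem1.CI} forces both endpoints of each newly added edge to have at most one neighbour in the current graph, so $G$ has maximum degree two and is a path), and the first half of part (b) --- producing $f_{pq}\in J_i:a_{i+1}=J_{(H_i)_{e_{i+1}}}$ from two common neighbours $p,q$ of an endpoint $\alpha$ via Remark \ref{Rem1.MCI}, then invoking the $d$-sequence identity $J_i:(a_{i+1}a_j)=J_i:a_j$ to conclude $f_{pq}a_j\in J_i$ --- is also the paper's. The divergence is in how the contradiction is extracted. The paper applies Remark \ref{Rem1.MCI} a second time: from $f_{pq}\in J_i:a_j=J_{(H_i)_{e_j}}$ and $f_{pq}\notin J_i$ it reads off that $\{p,q\}\subseteq N_{H_i}(\alpha_{a_j})$ or $\{p,q\}\subseteq N_{H_i}(\beta_{a_j})$, whence the four edges $\{\alpha,p\},\{\alpha,q\},\{\alpha_{a_j},p\},\{\alpha_{a_j},q\}$ form a cycle inside the tree. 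You instead certify the non-membership $f_{pq}f_{\gamma\delta}\notin J_{H_i}$ directly, via the minimal prime $P_{\{\alpha\}}(H_i)=(x_\alpha,y_\alpha)+\sum_t J_{\widetilde{C_t}}$ from \cite{HH}: the cut vertex $\alpha$ separates $p$ from $q$ in the forest $H_i$, and $\gamma,\delta$ lie in distinct components of $H_i$ (else $G$ would contain a cycle) and differ from $\alpha$ by the disjointness assumption, so neither factor lies in the prime and primality excludes the product --- contradicting $f_{pq}a_j\in J_{H_i}$. Both routes are valid; yours buys a little extra rigour, since the paper's deduction from $f_{pq}\in J_{(H_i)_{e_j}}\setminus J_{H_i}$ to the neighbourhood condition silently uses that a quadric $f_{kl}$ lies in a binomial edge ideal only when $\{k,l\}$ is an actual edge, whereas your prime-ideal certificate proves the needed exclusion from scratch; the cost is importing the $P_T$ primary-decomposition machinery where the paper stays entirely within its two quoted remarks. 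The one assertion you should justify rather than state is $f_{pq}\notin P_{\{\alpha\}}(H_i)$: grading $S$ by total degree in the variables of each component of $H_i\setminus\alpha$, every element of $J_{\widetilde{C_t}}$ has degree at least two in the $C_t$-variables, while $f_{pq}$ has degree one in each of two distinct components and avoids $x_\alpha,y_\alpha$, so it cannot lie in $(x_\alpha,y_\alpha)+\sum_t J_{\widetilde{C_t}}$.
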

\begin{proof}
(a) The proof follows from Remark \ref{Rem.CIpath}.

\vspace{2mm}

(b) If $i = n-1$, then proof follows immediately. 
Assume that $i < n-1$. Since $J_i:a_{i+1} \neq J_i$, implies that there exists $f_{kl} \in J_i:a_{i+1}$, where $\{k,l\} \in N_{H_{i}}(\alpha_{a_{i+1}})$ or $\{k,l\} \in N_{H_{i}}(\beta_{a_{i+1}})$, by Remark \ref{Rem1.MCI}.
If $\{k,l\} \in N_{H_{i}}(\alpha_{a_{i+1}})$ then clearly $\{k,l\} \notin N_{H_{i}}(\beta_{a_{i+1}})$,
otherwise $G$ has a cycle.  So one can further assume that $\{k,l\} \in N_{H_{i}}(\alpha_{a_{i+1}})$.
Suppose there exists an edge binomial $a_j$ such that $\{\alpha_{a_{i+1}}\} \cap \{\alpha_{a_{j}},\beta_{a_{j}}\} = \emptyset$, and $\{\beta_{a_{i+1}}\} \cap \{\alpha_{a_{j}},\beta_{a_{j}}\} = \emptyset$.  We have $J_i:a_{i+1}a_j= J_i:a_j$ by hypothesis. This implies that $f_{lk} \in J_{i}:a_j$. Since $f_{kl} \notin J_i$ implies that $\{k,l\} \in N_{H_{i}}(\alpha_{a_{j}})$ or $\{k,l\} \in N_{H_{i}}(\beta_{a_{j}})$. In both cases, $G$ has a cycle; this contradicts the fact that $G$ is a tree. For instance if $\{k,l\} \in N_{H_{i}}(\alpha_{a_{j}})$ the edges $\{\alpha_{a_{i+1}},k\},\{\alpha_{a_{i+1}},l\},\{\alpha_{a_{j}},k\},\{\alpha_{a_{j}},l\}$ forms a cycle. Thus $\{\alpha_{a_{i+1}}\} \cap \{\alpha_{a_{j}},\beta_{a_{j}}\} \neq \emptyset$ or $\{\beta_{a_{i+1}}\} \cap \{\alpha_{a_{j}},\beta_{a_{j}}\} \neq \emptyset$, for all $j > i+1$. Hence the proof is complete.
\end{proof}

\begin{example}
Let $G$ be a tree on $[7]$ with edge set  $$E(G)=\{\{1,2\},\{1,3\},\{1,4\},\{i,j\},\{i_1,j_1\}, \{i_2,j_2\}\}.$$ Let $a_1=f_{12}, a_2=f_{13}, a_3=f_{14}, a_4 = f_{ij}, a_5 = f_{i_1j_1}, a_6 = f_{i_2j_2}$ be a sequence of edge binomials of $G$. One can see that $(J_2:a_3) \neq J_2$, since $f_{23} \in (J_2:a_3)$. Suppose the sequence $a_1,a_2,a_3,a_4,a_5,a_6$ form a $d$-sequence, then the sequence must satisfy the following condition $J_2:a_3a_4 = J_2:a_4$. Since $a_4$ is a bridge, we can conclude that either $\{2,3\} \in N_{H_2}(i)$ or  $\{2,3\} \in N_{H_2}(j)$. Assume that $\{2,3\} \in N_{H_2}(i)$. 

\vspace{2mm}

\textbf{Case (i).} If $i=1$ then it is obvious that $\{1\} \cap \{i,j\} \neq \emptyset$. 

\vspace{2mm}

\textbf{Case (ii).} If $i \neq 1$ then $\{1,2\},\{2,i\},\{i,3\},\{3,1\}$ form a cycle in $G$. That is not possible since $G$ is a tree. Therefore, we may conclude that $\{1\} \cap \{i,j\} \neq \emptyset$.
\end{example}

\begin{notation} \label{defPK} Let
$\mathcal{T}_m$ denote the class of graphs having degree sequence in either of the forms  $(m,2,\ldots,1)$ or $(m,1,\ldots,1)$, where  $m \geq 2$. 

\vspace{2mm}

Equivalently, one can describe $G \in \mathcal{T}_m$ in terms of a vertex set and edge set as below:
$$V(G) = \{k_0,p_{1,1}, \ldots,p_{1,{s(1)+1}}, p_{2,1}, \ldots,p_{2,{s(2)+1}}, \ldots, p_{m,1}, \ldots,p_{m,{s(m)+1}}\}$$
with $s(i) \geq 0$ for all $1 \leq i \leq m$, and edge set  
$$E(G) = \{\{k_0,p_{i,{1}} \mid i = 1,\ldots,m\} \cup \bigcup_{i=1}^{m} \{p_{i,{j}},p_{i,{j+1}} \mid j=1,\ldots,s(i)\}\}.$$

Let $\mathcal{P}$ be an induced subgraph of $G$ on $V(G)\setminus k_0$. We call $\mathcal{P}$ as paths of $G$ and the vertex $k_0$ as the center of $G$. Let $\ell$ denote the number of vertices of degree $2$ in $G$. Notice that $i(G) = \sum_{i=1}^{m}s_{(i)}+1$, for all $m \geq 2$.

\end{notation}

The degree sequence presentation helps us to visualize the graph and the second description is helpful for proving results from the notation point of view. For illustration purpose, a graph $G_1 \in \mathcal{T}_5 $ having degree sequence $(5,2,2,2,1,1,1,1,1)$ is shown in Figure \ref{fig:Tm}.

\begin{figure}[ht]
    \centering
\tikzset{every picture/.style={line width=0.75pt}} 

\begin{tikzpicture}[x=0.65pt,y=0.65pt,yscale=-0.8,xscale=0.8]

\draw    (489,181) -- (452,213) ;
\draw    (489,181) -- (539,197) ;
\draw    (451,264) -- (409,296) ;
\draw    (425,137) -- (415,183) ;
\draw    (452,213) -- (415,183) ; 
\draw    (452,213) -- (412,246) ;
\draw    (452,213) -- (451,264) ;
\draw    (491,247) -- (452,213) ;

\draw (458,206) node [anchor=north west][inner sep=0.75pt]  [font=\footnotesize]  {$k_{0}$};
\draw (418,170) node [anchor=north west][inner sep=0.75pt]  [font=\footnotesize]  {$p_{1,{1}}$};
\draw (410,115) node [anchor=north west][inner sep=0.75pt]  [font=\footnotesize]  {$p_{1,{2}}$};
\draw (488,163) node [anchor=north west][inner sep=0.75pt]  [font=\footnotesize]  {$p_{2,{1}}$};

\draw (544,191) node [anchor=north west][inner sep=0.75pt]  [font=\footnotesize]  {$p_{2,{2}}$};
\draw (495,237) node [anchor=north west][inner sep=0.75pt]  [font=\footnotesize]  {$p_{3,{1}}$};
\draw (451,264) node [anchor=north west][inner sep=0.75pt]  [font=\footnotesize]  {$p_{4,{1}}$};
\draw (396,299) node [anchor=north west][inner sep=0.75pt]  [font=\footnotesize]  {$p_{4,{2}}$};
\draw (395,248) node [anchor=north west][inner sep=0.75pt]  [font=\footnotesize]  {$p_{5,{1}}$};

\filldraw[black] (409,296) circle (2pt) ;
\filldraw[black] (425,137) circle (2pt) ;
\filldraw[black] (489,181) circle (2pt) ;
\filldraw[black] (539,197)  circle (2pt) ;
\filldraw[black] (451,264) circle (2pt) ;
\filldraw[black] (415,183) circle (2pt) ;
\filldraw[black] (412,246) circle (2pt) ;
\filldraw[black] (491,247) circle (2pt) ;
\filldraw[black] (452,213) circle (2pt) ;

\end{tikzpicture}
\caption{The tree $G_1$ with degree sequence $(5,2,2,2,1,1,1,1,1)$.}
 \label{fig:Tm}
\end{figure}
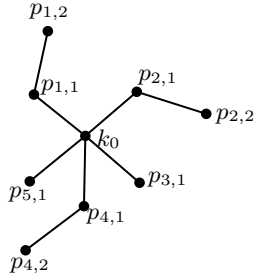

\begin{theorem}\label{Thm2.PtK1m}
Let $G$ be a $\mathcal{T}_{m}$ graph on $[n]$. The edge binomials of $G$ form a $d$-sequence. 
\end{theorem}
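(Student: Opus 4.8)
The plan is to exhibit one explicit ordering of the edge binomials and then verify Definition \ref{Def-d-sequence} directly, using the colon-ideal-as-graph-deformation dictionary of Remarks \ref{Rem1.MCI} and \ref{Rem1.CI}. Write $k_0$ for the center, so that $G$ consists of $m$ paths $B_1,\ldots,B_m$ emanating from $k_0$, with $B_i$ the path $k_0-p_{i_1}-\cdots-p_{i_{s(i)+1}}$. I would order the edge binomials so that all \emph{path edges} (those $f_{p_{i_j}p_{i_{j+1}}}$ lying strictly inside some $B_i$) come first, in any order, followed by the $m$ \emph{central} binomials $f_{k_0 p_{1_1}},\ldots,f_{k_0 p_{m_1}}$. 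This ordering is essentially forced by the Technical Lemma \ref{Lemma2.TL}(b): the only vertex of $G$ that can reach degree $\geq 3$ is $k_0$, so the first nontrivial colon must occur at a central edge, and from that point every remaining generator must meet $k_0$; placing all path edges first is exactly what permits this.

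Writing $a_1,\ldots,a_i$ for the first $i$ chosen binomials, $J_i=\langle a_1,\ldots,a_i\rangle$, and $H_i$ for the associated graph, the key reduction is the elementary remark that whenever $J_i:a_{i+1}=J_i$ one automatically has $J_i:a_{i+1}a_j=(J_i:a_{i+1}):a_j=J_i:a_j$ for every $j$, so the $d$-sequence condition at step $i$ is free. Thus I only have to inspect the steps with a nontrivial colon. First I would dispatch all the easy steps: for every path edge, and for the first two central edges, both endpoints of $a_{i+1}$ have degree $\leq 2$ in $H_i\cup a_{i+1}\subseteq G$, so Remark \ref{Rem1.CI} gives $(H_i)_{a_{i+1}}=H_i$; since every edge of a forest is a bridge, Remark \ref{Rem1.MCI} yields $J_i:a_{i+1}=J_{(H_i)_{a_{i+1}}}=J_i$, a trivial colon.

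The one remaining, and main, case is a central edge $a_{i+1}=f_{k_0 p_{(t+1)_1}}$ added once $k_0$ already has the $t\geq 2$ neighbors $p_{1_1},\ldots,p_{t_1}$. Here Remark \ref{Rem1.MCI} gives $J_i:a_{i+1}=J_K$, where $K$ is $H_i$ with the clique on $\{p_{1_1},\ldots,p_{t_1}\}$ adjoined (the other endpoint $p_{(t+1)_1}$ contributes nothing, as all path edges are already present and so its $H_i$-neighborhood is a single vertex or empty). The decisive observation is that \emph{every} later generator $a_j=f_{k_0 p_{(t')_1}}$ with $t'>t$ has the very same $H_i$-neighborhood $\{p_{1_1},\ldots,p_{t_1}\}$ at $k_0$, whence $J_i:a_j=J_K$ as well. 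Computing $(J_i:a_{i+1}):a_j=J_K:a_j$, the deformation $K_{a_j}$ would only adjoin the clique on $\{p_{1_1},\ldots,p_{t_1}\}$, already complete in $K$, so $K_{a_j}=K$ and $J_K:a_j=J_K$. Hence $J_i:a_{i+1}a_j=J_K=J_i:a_j$, which is exactly the required equality.

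I expect the main obstacle to be the bookkeeping of neighborhoods and the careful verification of the bridge hypothesis of Remark \ref{Rem1.MCI} at each stage: in particular, confirming that the branch $B_{t'}$ carrying the generator $a_j$ being colon-ed sits in a connected component of $K$ disjoint from $k_0$ (because its central edge has not yet been used), so that $a_j$ is genuinely a bridge and the deformation $K_{a_j}$ can touch only the clique already present at $k_0$. Once this is in place the chain $J_i:a_{i+1}=J_i:a_j=J_K:a_j=J_K$ collapses the $d$-sequence condition in every nontrivial case, and the minimal-generation requirement follows from the standard fact that distinct edge binomials minimally generate $J_G$ for a tree.
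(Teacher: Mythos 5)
Your proposal is correct and takes essentially the same route as the paper's own proof: the identical ordering (all path binomials of $\mathcal{P}$ first, then the central binomials at $k_0$), with Remark \ref{Rem1.CI} disposing of the trivial colon steps and Remark \ref{Rem1.MCI} identifying each nontrivial colon as $J_i$ plus the clique binomials on the current neighborhood of $k_0$, so that $J_i:a_{i+1}a_j$ and $J_i:a_j$ visibly coincide. Your explicit check that $K_{a_j}=K$ (the clique at $k_0$ being already adjoined, and $a_j$ remaining a bridge) merely spells out what the paper delegates to Lemma \ref{Lemma2.TL}.
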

\begin{proof}
Set $d_0 =  0 \in S$. We claim $G$ is generated by the $d$-sequence $d_1,\ldots,d_{n-1}$, where the first $n-m-1$ elements are edge binomials of $\mathcal{P}$ (edge binomials of paths of $G$ with respect to any order) and
\[d_i = x_{k_{0}}y_{{p_{i-(n-m-1)}},1}- x_{{p_{i-(n-m-1)}},1}y_{k_{0}}, \text{ for } n-m-1 < i \leq n-1.\] 
Then for all $0 \leq i \leq n-m+1$, and for all $j \geq i+1$,
\begin{equation*} 
\begin{split}
    (d_0,d_1,\ldots,d_{i}):d_{i+1}d_{j} & = ((d_0,d_1,\ldots,d_{i}) :d_{i+1}):d_{j} \\ 
    & = (d_0,d_1,\ldots,d_{i}),
\end{split}
\end{equation*}
also
\begin{equation*} 
 (d_0,d_1,\ldots,d_{i}):d_{j} = (d_0,d_1,\ldots,d_{i}),
\end{equation*}
where the equality follows from Remark \ref{Rem1.CI}, since the graphs associated to the binomial edge ideal $(d_0,d_1,\ldots,d_{i},d_{i+1})$ and $(d_0,d_1,\ldots,d_{i},d_{j})$ are disjoint union of paths. Also for all $n-m+1 < i \leq \ n-2$, and for all $j \geq i+1$,
\begin{equation} \label{eq,d1}
\begin{split}
    (d_0,d_1,\ldots,d_{i}) :d_{i+1}d_{j}  & = ((d_0,d_1,\ldots,d_{i}) :d_{i+1}):d_{j} \\
     & = (d_0,d_1,\ldots,d_{i})+( f_{kl} \mid \{k,l\} \in N(k_{0})),
\end{split}
\end{equation}
and moreover,
\begin{equation} \label{eq,d2}
    (d_0,d_1,\ldots,d_{i}):d_{j} = (d_0,d_1,\ldots,d_{i})+( f_{kl} \mid \{k,l\} \in N(k_{0})),
\end{equation}
where the equality in (\ref{eq,d1}) and (\ref{eq,d2}) follows from Remark \ref{Rem1.MCI} and Lemma \ref{Lemma2.TL}. Thus, we get $(d_0,d_1,\ldots,d_{i}) :d_{i+1}d_{j} = (d_0,d_1,\ldots,d_{i}):d_{j}$, for all $0 < i \leq \ n-2$, and for all $j \geq i+1$. Therefore, the edge binomials of $G$ form a $d$-sequence. 
\end{proof}

\begin{corollary}
Let $G$ be a $\mathcal{T}_{m}$ graph on $[n]$. Then $J_G$ is of linear type.
\end{corollary}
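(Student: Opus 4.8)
The plan is to deduce this immediately from the preceding theorem together with the implication \emph{$d$-sequence ideal $\implies$ ideal of linear type} recorded in the introduction (cf. \cite{H1980}). First I would invoke Theorem \ref{Thm2.PtK1m}, which exhibits an explicit ordering $d_1,\ldots,d_{n-1}$ of the edge binomials of a $\mathcal{T}_m$ graph $G$ that forms a $d$-sequence. Since these edge binomials are precisely a minimal generating set of $J_G$, this says exactly that $J_G$ is generated by a $d$-sequence in the sense of Definition \ref{Def-d-sequence}.

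The key step is then to apply Huneke's foundational result \cite{H1980}: every ideal generated by a $d$-sequence is of linear type, i.e. the canonical surjection $\mathrm{Sym}(J_G) \twoheadrightarrow \Rees(J_G)$ from the symmetric algebra onto the Rees algebra is an isomorphism. Combining this with the previous paragraph yields that $J_G$ is of linear type, which is the assertion of the corollary.

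I do not anticipate any genuine obstacle here: all the combinatorial and homological content is already carried by Theorem \ref{Thm2.PtK1m}, and the present statement is a formal consequence of the chain \emph{complete intersection} $\implies$ \emph{$d$-sequence} $\implies$ \emph{linear type}. The only point worth a remark is that "generated by a $d$-sequence" requires a generating set that is simultaneously minimal and satisfies the colon conditions of Definition \ref{Def-d-sequence}; both are established in the proof of Theorem \ref{Thm2.PtK1m}, so the hypotheses of Huneke's theorem are indeed met and the conclusion follows directly.
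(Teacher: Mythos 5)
Your proposal is correct and matches the paper's proof exactly: the paper also deduces the corollary by combining Theorem \ref{Thm2.PtK1m} with Huneke's result that ideals generated by a $d$-sequence are of linear type, citing \cite[Theorem 3.1]{H1980}. Your additional remark spelling out the symmetric-algebra-to-Rees-algebra isomorphism and the minimal-generation hypothesis is a harmless elaboration of the same one-line argument.
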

\begin{proof}
The proof follows from \cite[Theorem 3.1]{H1980}.
\end{proof}

\begin{notation}
 \label{Def:Hm}
Let $\mathcal{H}_m$ be a class of trees having a degree sequence in either of the form $(m,3,2,\ldots, 1)$ or $(m,3,1,\ldots, 1)$ such that the vertex of degree $m$ and $3$ are adjacent, where  $m \geq 3$.

\vspace{2mm}

Equivalently, one can describe $G \in \mathcal{H}_m$ in terms of a vertex set and edge set as below:
$$V(G) = \{k_0,k_1,p_{1,1}, \ldots,p_{1,{s(1)+1}}, p_{2,1}, \ldots,p_{2,{s(2)+1}}, \ldots, p_{{m+1},1}, \ldots,p_{{m+1},{s(m+1)+1}}\}$$
with $s(i) \geq 0$ for all $1 \leq i \leq m+1$, and edge set 
\begin{equation*}
    \begin{split}
        E(G) = &\{\{k_0,p_{i,{1}} \mid i = 1,\ldots,m-1\} \cup \{k_0,k_1\} \cup \{k_1,p_{i,{1}} \mid i = m,m+1\} \\
        &\bigcup_{i=1}^{m+1} \{p_{i,{j}},p_{i,{j+1}} \mid j=1,\ldots,s(i)\}\}.
    \end{split}
\end{equation*}

\end{notation}
For illustration purpose, a graph $G_2 \in \mathcal{H}_4 $ having degree sequence $(4,3,1,1,1,1,1)$ is shown in Figure \ref{fig:Hm}. Notice that $i(G) = \sum_{i=1}^{m+1}s_{(i)}+2$, for all $m \geq 3$.

\begin{figure}[ht]
    \centering

\tikzset{every picture/.style={line width=0.75pt}} 

\begin{tikzpicture}[x=0.75pt,y=0.75pt,yscale=-1,xscale=1]

\draw    (371,533) -- (342.42,510.92) ;
\draw    (371,533) -- (370.42,503.92) ;
\draw    (371,533) -- (398.42,511.92) ;
\draw    (372.25,558.92) -- (394.67,583.83) ;
\draw    (372.25,558.92) -- (348.67,583.83) ;
\draw    (371,533) -- (372.25,558.92) ;

\draw (372,530) node [anchor=north west][inner sep=0.75pt]  [font=\scriptsize]  {$k_{0}$};
\draw (326,497) node [anchor=north west][inner sep=0.75pt]  [font=\scriptsize]  {$p_{1,1}$};
\draw (364,489) node [anchor=north west][inner sep=0.75pt]  [font=\scriptsize]  {$p_{2,1}$};
\draw (396,497) node [anchor=north west][inner sep=0.75pt]  [font=\scriptsize]  {$p_{3,1}$};
\draw (365,562) node [anchor=north west][inner sep=0.75pt]  [font=\scriptsize]  {$k_{1}$};
\draw (337,589) node [anchor=north west][inner sep=0.75pt]  [font=\scriptsize]  {$p_{5,1}$};
\draw (394.67,589) node [anchor=north west][inner sep=0.75pt]  [font=\scriptsize]  {$p_{4,1}$};

\filldraw[black] (371,533) circle (1.5pt) ;
\filldraw[black] (342.42,510.92) circle (1.5pt) ;
\filldraw[black] (370.42,503.92) circle (1.5pt) ;
\filldraw[black] (398.42,511.92) circle (1.5pt) ;
\filldraw[black] (372.25,558.92)  circle (1.5pt) ;
\filldraw[black] (394.67,583.83) circle (1.5pt) ;
\filldraw[black] (348.67,583.83)  circle (1.5pt) ;
\end{tikzpicture}
    \caption{The tree $G_2$ with degree sequence $(4,3,1,1,1,1,1)$.}
    \label{fig:Hm}
\end{figure}
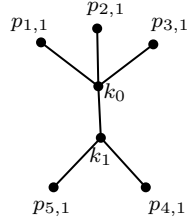

\begin{theorem} \label{Thm2.T2}
Let $G$ be a $\mathcal{H}_{m}$ graph on $[n]$. The edge binomials of $G$ form a $d$-sequence. In particular, $J_G$ is of linear type.
\end{theorem}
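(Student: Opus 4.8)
The plan is to exhibit an explicit ordering of the edge binomials of $G$ and to check the colon conditions of Definition \ref{Def-d-sequence} directly, reusing Theorem \ref{Thm2.PtK1m} as a black box (the fact that the edge binomials minimally generate $J_G$ is standard, so only the colon identities require work). The decisive structural observation is that deleting the bridge $\{k_0,k_1\}$ disconnects $G$ into two vertex-disjoint pieces: the component $G_1$ through $k_1$ is a single \emph{path} (since $\deg_G(k_1)=3$ drops to $2$, so $k_1$ becomes an interior vertex joining its two pendant paths), while the component $G_0$ through $k_0$ is exactly a $\mathcal{T}_{m-1}$ graph (a star-with-paths centred at $k_0$, with $m-1\ge 2$ arms). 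Accordingly I would list the generators so that all edge binomials of $G_1$ come first, then all edge binomials of $G_0$ (ordered as in the proof of Theorem \ref{Thm2.PtK1m}: path binomials, then the binomials at $k_0$), and finally the bridge binomial $f_{k_0k_1}$. The whole point of deferring $f_{k_0k_1}$ to the very end is that, at the moment it is adjoined, Remark \ref{Rem1.MCI} fills in the two cliques on $N_G(k_0)$ and on $N_G(k_1)$ \emph{simultaneously}, so the two independent families of fill-in minors never have to be reconciled against each other during the build-up.

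For the verification I would split into cases according to where the two indices land. While we are inside $G_1$, every vertex has degree at most $2$, so by Remark \ref{Rem1.CI} each partial colon $J_i:a_{i+1}$ is trivial (equals $J_i$); hence the identity $J_i:a_{i+1}a_j=J_i:a_j$ holds automatically for these $i$, \emph{whatever} $a_j$ is (path binomial, $G_0$-binomial, or the bridge), simply because $(J_i:a_{i+1}):a_j=J_i:a_j$. Once we are inside $G_0$, the ideal generated so far is $B+C_i$, where $B$ is the full binomial edge ideal of $G_1$ and $C_i$ is the partial ideal of $G_0$; since $G_0$ and $G_1$ use disjoint sets of variables, the standard colon identity for extended ideals in disjoint variables, $(B+C_i):f=B+(C_i:f)$ for any $f$ in the variables of $G_0$, reduces the required identity to the $d$-sequence identity for $C_i$ inside $G_0$, which is precisely Theorem \ref{Thm2.PtK1m} applied to $\mathcal{T}_{m-1}$.

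It remains to treat the colons involving the final bridge binomial $e:=f_{k_0k_1}$. Here I would use that, by Remark \ref{Rem1.MCI}, the colon by $e$ adjoins the clique binomials on $N(k_0)$ together with those on $N(k_1)$, computed in the graph built so far, and that passing to a colon by an earlier $a_{i+1}$ does \emph{not} enlarge the neighbourhoods of $k_0$ or $k_1$. Therefore the two clique-fills coincide whether computed in $J_i$ or in $J_i:a_{i+1}$, and any fill-in produced by $a_{i+1}$ lies among $N(k_0)$ (resp. $N(k_1)$), hence is already absorbed into the larger clique adjoined by $e$; this yields $J_i:a_{i+1}e=J_i:e$. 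Finally, the top case $J_{n-2}:e^{2}=J_{n-2}:e$ follows because a second colon by $e$ creates no new fill-in, the two cliques being already present. Once all cases are checked, the edge binomials form a $d$-sequence, and $J_G$ is of linear type by \cite[Theorem 3.1]{H1980}.

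The step I expect to be the genuine obstacle is exactly this bridge interaction. A naive ordering that inserts $f_{k_0k_1}$ early (say right after building up $k_1$) in fact \emph{fails} the $d$-sequence identity: the fill-in minor $f_{p_{m_1}p_{m+1_1}}$ created at the bridge persists under the subsequent colons by the $k_0$-binomials (which are in disjoint variables) yet is not produced by those binomials alone, so the two sides of the identity differ. Isolating the two high-degree vertices into disjoint clusters and deferring the bridge to the end is what makes the clique-fills compatible; checking that the within-cluster fills are genuinely subsumed by the bridge-fills, and invoking the Technical Lemma \ref{Lemma2.TL} to see that no essentially different ordering of the later generators is even admissible, is the crux of the argument.
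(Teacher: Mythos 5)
Your proposal is correct and follows essentially the same route as the paper: the paper's proof uses exactly your ordering (all path binomials of the induced subgraph on $V(G)\setminus k_0$ first, in arbitrary order, then the star binomials at $k_0$, with the bridge binomial $f_{k_0k_1}$ placed last as $d_{n-1}$) and then verifies the colon identities via Remarks \ref{Rem1.CI} and \ref{Rem1.MCI} and Lemma \ref{Lemma2.TL}, just as you do. Your write-up merely supplies details the paper compresses into ``the rest of the proof is similar to the proof of Theorem \ref{Thm2.PtK1m}'' --- notably the disjoint-variables reduction to $\mathcal{T}_{m-1}$ and the explicit check that the clique fill-ins at $N(k_0)$ and $N(k_1)$ absorb all earlier fill-ins, including your correct observation that inserting $f_{k_0k_1}$ early breaks the $d$-sequence identity.
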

\begin{proof}
Consider the following edge binomial sequence $d_1,\ldots,d_{n-1}$, where the first $n-m-1$ elements are edge binomials of $\mathcal{P}$, where $\mathcal{P}$ denotes an induced subgraph of $G$ on $V(G) \setminus k_0$ (independent of order), $$d_i = x_{k_{0}}y_{p_{{i-(n-m-1)},1}} - x_{p_{{i-(n-m-1)},1}}y_{k_{0}},$$ for $n-m-1 < i \leq n-2$, and $$d_{n-1} = x_{k_{0}}y_{k_{1}}-x_{k_{1}}y_{k_{0}}.$$
The rest of the proof is similar to the proof of Theorem \ref{Thm2.PtK1m}.
\end{proof}

We recall a couple of definitions from \cite{SZ}. A graph $G$ is of $T$-type graph if $G$ is obtained by adding an edge between an internal vertex of a path and a pendant vertex of another path. A graph $G$ is said to be a $H$-type graph if $G$ is obtained by adding an edge between two internal vertices of two distinct paths.

\begin{remark} The authors in \cite[Theorem 4.3.]{JAR} have characterized trees whose binomial edge ideals are almost complete intersections. Also, the authors proved that almost complete intersection binomial edge ideals are generated by a $d$-sequence. Shen and Zhu \cite{SZ} classified almost complete intersection binomial edge ideals into two types of graphs called $T$-type and $H$-type. One can see that the $T$-type graph is an induced subgraph of $\mathcal{T}_{m}$ graph, and the $H$-type graphs are an induced subgraph of $\mathcal{H}_{m}$ graph.
\end{remark}

\begin{proposition} \label{Pro:1}
If edge binomials of a tree form a $d$-sequence, then the tree is either a path  $ P_2$ or a graph in  $\mathcal{T}_m \cup \mathcal{H}_m$.
\end{proposition}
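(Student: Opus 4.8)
The plan is to start from a given $d$-sequence of edge binomials $a_1,\ldots,a_n$ of the tree $G$ (with $J_i=\langle a_1,\ldots,a_i\rangle$ and $H_i$ the associated graph, as in Lemma \ref{Lemma2.TL}), read off the degree sequence of $G$, and match it against Notation \ref{defPK} and Notation \ref{Def:Hm}. The first dichotomy is supplied by the Technical Lemma: if $J_i:a_{i+1}=J_i$ for every $i$, then Lemma \ref{Lemma2.TL}(a) gives that $G$ is a path, i.e. $P_2$ (degree sequence $(1,1)$) or a $\mathcal{T}_2$ graph. Hence I may assume there is a smallest index $i$ with $J_i:a_{i+1}\neq J_i$; write $u=\alpha_{a_{i+1}}$ and $w=\beta_{a_{i+1}}$. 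By Remark \ref{Rem1.CI} and the argument proving Lemma \ref{Lemma2.TL}, one endpoint, say $u$, has two neighbours $k,l$ in the forest $H_i$ (so $\deg_{H_i}(u)=2$ and $u$ is interior to a path of $H_i$), and $a_{i+1}$ raises $\deg(u)$ to $3$.

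By the Intersection Property (Lemma \ref{Lemma2.TL}(b)) every later binomial $a_j$ with $j>i+1$ meets $\{u,w\}$. I would then sharpen this using the $d$-sequence colon identity $(J_i:a_{i+1}):a_j=J_i:a_j$ together with the explicit shape $J_i:a_{i+1}=J_{(H_i)_{a_{i+1}}}=J_i+\langle f_{kl}\rangle$ (Remark \ref{Rem1.MCI}), where $f_{kl}$ comes from $\{k,l\}=N_{H_i}(u)$ (and, if $\deg_{H_i}(w)=2$, an analogous $f_{k'l'}$ from $N_{H_i}(w)$). Since $f_{kl}$ trivially lies in $(J_i:a_{i+1}):a_j$, the identity forces $f_{kl}\,a_j\in J_i$ for all $j>i+1$. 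Using the Pl\"ucker-type syzygy $x_u f_{kl},\,y_u f_{kl}\in J_i$ among $f_{uk},f_{ul},f_{kl}$, one checks $f_{kl}\,a_j\in J_i$ \emph{whenever} $a_j$ is incident to $u$ (write $f_{kl}f_{uz}=y_z(x_uf_{kl})-x_z(y_uf_{kl})$). Conversely, if $a_j$ avoids $u$, then, as $G$ is a tree, $a_j$ joins two distinct components of $H_i$; so $f_{kl}\,a_j\notin P_{\{u\}}(H_i)=\langle x_u,y_u\rangle+J_{\widetilde{H_i-u}}$, the minimal prime of $J_i$ obtained by deleting $u$ and completing each resulting component (cf. the primary decomposition in \cite{HH}), because there $f_{kl}\notin P_{\{u\}}$ ($u$ separates $k$ from $l$) and $a_j\notin P_{\{u\}}$. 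Hence every later binomial is incident to $u$.

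With this the structure is forced by a short case analysis. If $\deg_{H_i}(w)\le 1$, then $w$ gets no further edge, so $\deg_G(w)\le 2$, and all edges after $a_{i+1}$ emanate from $u$. Such an edge either attaches $u$ to an endpoint of a path (keeping that vertex of degree $\le 2$) or to an interior vertex $x$ of a path, raising $\deg(x)$ to $3$; in the latter case the colon identity at the index of $\{u,x\}$ produces the generator $f_{p^-p^+}$ with $\{p^-,p^+\}=N(x)$, absorbable only by an edge incident to $x$, which forces $\{u,x\}$ to be the \emph{last} binomial. Thus either no interior attachment occurs and $u$ is the unique vertex of degree $\ge 3$, giving $G\in\mathcal{T}_m$ with $m=\deg_G(u)$; or exactly one occurs, as the final edge, yielding two adjacent vertices $u$ and $x$ with $\deg_G(x)=3$, i.e. $G\in\mathcal{H}_m$. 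If instead $\deg_{H_i}(w)=2$, then $a_{i+1}$ creates both $f_{kl}$ and $f_{k'l'}$, and no single later binomial can absorb both (it would have to meet $u$ and $w$ at once); hence $a_{i+1}$ is the last edge and $G$ is two adjacent degree-$3$ vertices with paths attached, again a $\mathcal{H}_m$ graph with $m=3$.

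The main obstacle is the sharpening step in the second paragraph: upgrading the weak incidence statement of Lemma \ref{Lemma2.TL}(b) into the rigid conclusion that all branching is concentrated at the single vertex $u$, with at most one exceptional degree-$3$ vertex introduced by the final binomial. Everything hinges on the membership test $f_{kl}\,a_j\in J_i$, whose positive direction uses the Pl\"ucker relation and whose negative direction needs a separating prime from the primary decomposition of the binomial edge ideal of the forest $H_i$. Organizing these colon computations uniformly across the sub-cases, and verifying in each that the resulting degree sequence is exactly one of those listed in Notation \ref{defPK} or Notation \ref{Def:Hm}, is where the real work lies.
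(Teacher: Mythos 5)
Your proposal is correct, and it shares the paper's overall skeleton --- the dichotomy from the Technical Lemma \ref{Lemma2.TL}, the minimal index $i$ with $J_i:a_{i+1}\neq J_i$ (so $H_i$ is a disjoint union of paths), and a terminal case analysis landing on $P_2$, $\mathcal{T}_m$, or $\mathcal{H}_m$ --- but you implement the decisive rigidity step with genuinely different machinery. The paper stays inside the colon calculus of Remark \ref{Rem1.MCI}: it quotes Lemma \ref{Lemma2.TL} to assert that all later edges meet the hub $k_0$, realizes $G$ as paths glued to a star $K_{1,m}$, and kills the bad configurations by two explicit cases (three paths at one pendant vertex contradicts minimality of $i$; two pendant vertices each carrying two paths violates $J_{p-1}:d_pd_q = J_{p-1}:d_q$). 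You instead prove the sharper membership criterion that $f_{kl}a_j\in J_i$ exactly when $a_j$ is incident to $u$, using the Pl\"ucker syzygy $x_uf_{kl},y_uf_{kl}\in\langle f_{uk},f_{ul}\rangle$ for the positive direction and the prime $P_{\{u\}}$ from the primary decomposition of \cite{HH} for the negative one; your bookkeeping there is sound, since any $a_j\notin E(H_i)$ joins two components of $H_i$ and $u$ separates $k$ from $l$, so neither factor lies in $P_{\{u\}}$. This buys two things. First, it makes explicit an upgrade that the paper uses but never isolates: the recorded statement of Lemma \ref{Lemma2.TL}(b) only gives incidence with $\{u,w\}$, while the proof of Proposition \ref{Pro:1} asserts incidence with $k_0$ alone; the cycle argument inside the lemma's proof does yield this, but your criterion pins it down cleanly. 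Second, iterating the same criterion at the index of an interior attachment $\{u,x\}$ forces that binomial to be the last one, which subsumes the paper's Cases 3 and 4 in one uniform mechanism, and your subcase $\deg_{H_i}(w)=2$ (forcing $a_{i+1}$ last and $G\in\mathcal{H}_3$) matches the paper's Case 2 with $m=3$. Two cosmetic caveats: the positive direction of your membership test is not actually needed for the classification (only the negative direction forces incidence with $u$), and you should record $f_{kl}\notin J_i$ (otherwise the tree would contain the triangle $u,k,l$) before invoking primeness --- both are one-line checks.
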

\begin{proof}
 Let $G$ be tree on $[n]$ such that edge binomials $d_1,\ldots,d_{n-1}$ of $G$ form a  $d$-sequence. Let $H_k$ be a tree associated with edge binomials $d_1,\ldots,d_{k}$ for $k=1,\ldots,n$. Suppose there exists the smallest integer $i$ such that $J_i:d_{i+1} \neq J_i$, then from Remark \ref{Rem1.MCI} it follows that $H_{i+1}$ has a vertex of degree $3$. Let $k_0$ be a vertex of degree $3$ in $H_{i+1}$. From Remark \ref{Rem.CIpath} it follows that $H_i$ is a disjoint union of paths. From Lemma \ref{Lemma2.TL}(b) it follows that edges associated with edge binomials $d_j$ for all  $i+2 \leq j \leq n-1$, intersect with $k_0$. This implies that $\deg_G(k_0)=n-i+1$. Set $m=n-i+1$. Since $G$ is a tree, $G$ can be obtained by identifying one free vertex of each path with a free vertex of $K_{1,m}$. Now we consider all the possible ways in the following cases:

\vspace{2mm}

\noindent \textbf{Case 1.}  If one free vertex of each path is identified with distinct pendant vertices of $K_{1,m}$, then $G$ is isomorphic to a graph in $\mathcal{T}_{m}$. 

\vspace{2mm}

\noindent \textbf{Case 2.} Let $k_1$ be a pendant vertex of $K_{1,m}$. Suppose the vertex $k_1$ is identified with two free vertices of two distinct paths. One free vertex from each path is identified from the remaining paths with distinct pendant vertices of $K_{1,m}\setminus k_{1}$. In that case,  $G$ is isomorphic to a graph in $\mathcal{H}_{m}$. 

\vspace{2mm}

\noindent \textbf{Case 3.} If three (or more) pendant vertices of three (or more) distinct paths are identified with a pendant vertex of $K_{1,m}$ then it contradicts the choice of the smallest integer $i$.

\vspace{2mm}

\noindent \textbf{Case 4.} Let $k_1$ and $k_2$ be two pendant vertices of $K_{1,m}$. Let $d_p = f_{k_0k_1}$ and $d_q = f_{k_0k_2}$ be edge binomials, where $p,q >i$. Suppose the vertex $k_1$ is identified with two free vertices of two distinct paths and $k_2$ is identified with two free vertices of another two distinct paths, then $J_{p-1}:d_pd_q \neq J_{p-1}:d_q$, where $p <  q$, by Remark \ref{Rem1.MCI}. Therefore, in this case, any sequence of edge binomials of $G$ does not satisfy the $d$-sequence condition.

\vspace{2mm}

If no such $i$ exists, then from Lemma \ref{Lemma2.TL}(a) it follows that $G$ is a path.
\end{proof}

As an immediate consequence, we have the following:
\begin{corollary} \label{Cor2.t1}
Let $G$ be a tree. If $G$ has at least two vertices with degrees greater than or equal to $4$ or $G$ has at least three vertices with degrees greater than or equal to $3$, then any sequence of edge binomials of $G$ does not form a $d$-sequence. 
\end{corollary}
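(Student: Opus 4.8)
The plan is to derive Corollary \ref{Cor2.t1} directly from the classification in Proposition \ref{Pro:1} by reading off the degree sequences of the three admissible graph types. Since Proposition \ref{Pro:1} asserts that a tree whose edge binomials form a $d$-sequence must be $P_2$, a $\mathcal{T}_m$ graph, or an $\mathcal{H}_m$ graph, the contrapositive strategy is cleanest: I would argue that a tree $G$ satisfying either of the two hypotheses (at least two vertices of degree $\geq 4$, or at least three vertices of degree $\geq 3$) cannot be isomorphic to any of these three graphs, hence its edge binomials cannot form a $d$-sequence.

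First I would dispose of $P_2$ trivially, since its only degrees are $1$ and $1$. Next, for the $\mathcal{T}_m$ class (Notation \ref{defPK}), the degree sequence is $(m,2,\ldots,1)$ or $(m,1,\ldots,1)$, so exactly one vertex, namely the center $k_0$, has degree exceeding $2$; every other vertex has degree at most $2$. Consequently a $\mathcal{T}_m$ graph has at most one vertex of degree $\geq 3$ and at most one vertex of degree $\geq 4$. For the $\mathcal{H}_m$ class (Notation \ref{Def:Hm}), the degree sequence is $(m,3,2,\ldots,1)$ or $(m,3,1,\ldots,1)$ with the degree-$m$ vertex $k_0$ adjacent to the degree-$3$ vertex $k_1$, and all other vertices of degree at most $2$. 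Thus an $\mathcal{H}_m$ graph has exactly two vertices of degree $\geq 3$ (namely $k_0$ and $k_1$), and among these only $k_0$ can have degree $\geq 4$ (since $k_1$ has degree exactly $3$).

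I would then simply check the two hypotheses against this census. If $G$ has at least two vertices of degree $\geq 4$, it fails for $P_2$ and $\mathcal{T}_m$ (at most one such vertex) and for $\mathcal{H}_m$ (only $k_0$ can have degree $\geq 4$, so at most one such vertex). If $G$ has at least three vertices of degree $\geq 3$, it again fails for $P_2$ and $\mathcal{T}_m$ (at most one such vertex) and for $\mathcal{H}_m$ (exactly two such vertices, $k_0$ and $k_1$). In either case $G$ is none of the three types, so by Proposition \ref{Pro:1} no sequence of its edge binomials can form a $d$-sequence, which is exactly the claim.

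I do not anticipate a genuine obstacle here, as the corollary is a bookkeeping consequence of the preceding classification; the only care needed is to make the counting of high-degree vertices in $\mathcal{T}_m$ and $\mathcal{H}_m$ precise and to invoke the correct direction of Proposition \ref{Pro:1} (its statement is already a complete classification, so the contrapositive applies immediately). The mild subtlety worth stating explicitly is that in $\mathcal{H}_m$ the degree-$3$ vertex $k_1$ has degree \emph{exactly} $3$ and not more, which is why the second hypothesis (three vertices of degree $\geq 3$) rules it out rather than the first.
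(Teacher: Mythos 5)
Your proposal is correct and follows exactly the route the paper intends: the paper states this corollary as an immediate consequence of Proposition~\ref{Pro:1} without writing out the details, and your argument supplies precisely that bookkeeping — the census that $P_2$ and $\mathcal{T}_m$ have at most one vertex of degree $\geq 3$, while $\mathcal{H}_m$ has exactly two (with $k_1$ of degree exactly $3$, so at most one vertex of degree $\geq 4$). Your explicit note about $k_1$ having degree exactly $3$ is the right subtlety to flag, and nothing further is needed.
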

\begin{corollary} \label{Cor2.t2}
Let $G$ be a tree. If there exists $u,v \in V(G)$ such that $\deg({u}), \deg({v}) \geq 3$ and $d(u,v) \geq 2$. Then any sequence of edge binomials of $G$ does not form a $d$-sequence.
\end{corollary}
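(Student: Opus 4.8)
The plan is to proceed by contradiction and reduce everything to the structural classification already obtained in Proposition \ref{Pro:1}. Suppose some sequence of edge binomials of $G$ forms a $d$-sequence; then $G$ must be one of $P_2$, a $\mathcal{T}_m$ graph, or an $\mathcal{H}_m$ graph. For each of these three shapes I would read off, from the explicit vertex and edge descriptions, how many vertices have degree at least $3$ and at what distances they sit, and then check that a pair $u,v$ with $\deg(u),\deg(v)\geq 3$ and $d(u,v)\geq 2$ cannot occur. Note first that $d(u,v)\geq 2$ forces $u\neq v$, so we really are asking for two distinct high-degree vertices that are non-adjacent.

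The first two cases would be dispatched immediately. In $P_2$ every vertex has degree $1$, so there is no vertex of degree $\geq 3$ at all, already contradicting the existence of $u$. For $G\in\mathcal{T}_m$, the edge set in Notation \ref{defPK} shows that every vertex $p_{i_j}$ lies on a path emanating from the center and hence has degree at most $2$; only the center $k_0$ can have degree $\geq 3$. Thus $\mathcal{T}_m$ has at most one vertex of degree $\geq 3$, which is incompatible with two distinct such vertices $u$ and $v$.

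The case $G\in\mathcal{H}_m$ is the only one that needs a sentence of genuine argument, and it is the step I would flag as the main (though still mild) obstacle. From Notation \ref{Def:Hm} exactly two vertices can attain degree $\geq 3$: the vertex $k_0$ of degree $m\geq 3$ and the vertex $k_1$ of degree $3$, every other $p_{i_j}$ again having degree at most $2$. The crucial observation is that $\{k_0,k_1\}\in E(G)$, so $d(k_0,k_1)=1$. Since $u$ and $v$ are two distinct vertices of degree $\geq 3$ and $\mathcal{H}_m$ contains no third such vertex, I would conclude that $\{u,v\}=\{k_0,k_1\}$, whence $d(u,v)=1<2$, contradicting the hypothesis. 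Having reached a contradiction in all three cases, the conclusion that no sequence of edge binomials of $G$ forms a $d$-sequence follows. The whole argument is a short case-count against Proposition \ref{Pro:1}; the only point requiring care is verifying that in $\mathcal{H}_m$ the two high-degree vertices are forced to be adjacent, which is exactly the adjacency $\{k_0,k_1\}\in E(G)$ written into the definition.
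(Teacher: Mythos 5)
Your proof is correct and follows exactly the route the paper intends: the paper states this corollary as an immediate consequence of Proposition \ref{Pro:1}, and your case check (no vertex of degree $\geq 3$ in $P_2$, at most one in $\mathcal{T}_m$, and exactly two \emph{adjacent} ones, $k_0$ and $k_1$, in $\mathcal{H}_m$) is precisely the unpacking of that consequence. The key observation you flagged --- that the adjacency $\{k_0,k_1\}\in E(G)$ built into Notation \ref{Def:Hm} forces $d(u,v)=1$ --- is indeed the only point requiring care, and you handle it correctly.
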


\subsection*{Conclusion} The proof of Theorem~\ref{P1.1} follows from Theorems \ref{Thm2.PtK1m}, \ref{Thm2.T2} and Proposition \ref{Pro:1}.

\section{Regularity of the product of a disjoint union of paths and a complete graph}
In this section, we state the regularity of the binomial edge ideals of $d$-sequence graphs. The results of this section will be used in Section $5$. 

\vspace{2mm}

\begin{lemma} \label{Thm3.T2}
Let $G$ be a tree. If $G \in \{\mathcal{T}_m,\mathcal{H}_m\}$ graph, then $\reg {S}/{J_{G}} =i(G) + 1$.
\end{lemma}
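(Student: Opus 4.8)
The plan is to realize both families as connected \emph{flower-free block graphs} and then invoke Remark~\ref{Rem.FF}, which computes the regularity of such graphs as $i(G)+1$. Indeed, the $\mathcal{T}_m$ case is already immediate from Remark~\ref{rem1.Reg}(c), since by Notation~\ref{defPK} a $\mathcal{T}_m$ graph has degree sequence $(m,2,\ldots,1)$ or $(m,1,\ldots,1)$; so the real substance is to treat $\mathcal{H}_m$, and the cleanest route is to handle both families uniformly inside the block-graph framework of Remark~\ref{Rem.FF}.

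First I would record the two structural facts that every tree satisfies: each of its blocks is a single edge $K_2$, which is complete, so a tree is a block graph; and a tree on at least two vertices is connected with no isolated vertex. This places $G$ in the hypothesis of Remark~\ref{Rem.FF}, provided I can show $G$ is flower-free. For that I would argue by the degree count forced by a flower subgraph. Any induced subgraph of a tree is acyclic, whereas a flower $F_{h,k}(v)$ with $h\geq 1$ contains a triangle $C_3$; hence only flowers with $h=0$ could occur, and these are $k$ copies of $K_{1,3}$ glued at a common free (leaf) vertex $v$, where the hypothesis $cdeg(v)\geq 3$ forces $k\geq 3$. Such a flower has its center $v$ of degree $k\geq 3$ together with $k$ petal-centers of degree $3$, hence at least $k+1\geq 4$ vertices of degree $\geq 3$. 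Since passing to an induced subgraph cannot raise a vertex degree, $G$ would itself need four vertices of degree $\geq 3$. Reading off the degree sequences, a $\mathcal{T}_m$ graph has only one vertex of degree $\geq 3$ (the center $k_0$) and an $\mathcal{H}_m$ graph has exactly two ($k_0$ and $k_1$), so neither can contain a flower as an induced subgraph, and $G$ is flower-free.

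The main obstacle is precisely this flower-free verification: one must be careful that the relevant count is of degrees \emph{inside} the induced subgraph, and that the acyclicity of trees eliminates all flowers carrying a $C_3$ component, leaving only the $h=0$ case that the degree count then rules out. Once flower-freeness is established, $\reg S/J_G = i(G)+1$ follows at once from Remark~\ref{Rem.FF}. I would deliberately avoid the tempting decomposition of $\mathcal{T}_m$ or $\mathcal{H}_m$ as legs glued at the high-degree vertices via Remark~\ref{Rem1.GUG}: that identification takes place at an internal (non-free) vertex, violating condition (b) once $m\geq 3$ because the triple intersection at the center is nonempty, and it would in fact return the wrong value. A purely inductive alternative --- peeling the bridge $\{k_0,k_1\}$ of an $\mathcal{H}_m$ graph through the short exact sequence governed by $J_{G\setminus e}:f_e=J_{(G\setminus e)_e}$ (Remark~\ref{Rem1.MCI}) and the regularity lemma (Lemma~\ref{Lemma1.Reg}) --- is available but more laborious, since the colon graph $(G\setminus e)_e$ carries $K_m$ and $K_3$ blocks whose regularities must again be extracted from Remark~\ref{Rem.FF}.
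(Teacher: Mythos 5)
Your proposal is correct and takes essentially the same route as the paper, whose entire proof is the one-line observation that any $G\in\{\mathcal{T}_m,\mathcal{H}_m\}$ is a flower-free block graph followed by an appeal to Remark~\ref{Rem.FF}. Your flower-free verification (acyclicity of trees rules out flowers with $h\geq 1$, and the count of at least $k+1\geq 4$ vertices of degree $\geq 3$ rules out $h=0$, since $\mathcal{T}_m$ and $\mathcal{H}_m$ have at most one and two such vertices respectively) simply supplies the details the paper leaves to the reader.
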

\begin{proof}
Observe that any $G\in \{\mathcal{T}_m,\mathcal{H}_m\}$ is a flower-free block graphs. Thus the result follows from Remark \ref{Rem.FF}.
\end{proof}

\begin{notation} \label{Def1.g}
Let $n \geq 3$, and $m \geq 1$.
$\mathcal{C}_{n,m}$ be a graph on $[n+m]$ such that 
$$V(\mathcal{C}_{n,m}) = \{v_0,v_1,\ldots,v_{n-1},k_1,\ldots,k_m\},$$ and edge set $$E(\mathcal{C}_{n,m}) = \{\{v_i,v_j \mid 0\leq i<j \leq n-1\}, \{v_0,k_i \mid 1 \leq i \leq m\} \}.$$
\end{notation}

The $\mathcal{C}_{3,2}$ graph is illustrated in Figure \ref{fig:1}.
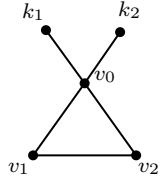
\begin{figure}[ht] \label{fig:1}
    \centering
\tikzset{every picture/.style={line width=0.75pt}} 

\begin{tikzpicture}[x=0.75pt,y=0.75pt,yscale=-1,xscale=1]


\draw   (533.54,536.67) -- (559.25,572.92) -- (507.83,572.92) -- cycle ;
\draw    (514.25,509.92) -- (533.54,536.67) ;
\draw    (551.25,510.92) -- (533.54,536.67) ;


\draw (536.83,528.67) node [anchor=north west][inner sep=0.75pt]  [font=\scriptsize]  {$v_{0}$};
\draw (500.83,494.67) node [anchor=north west][inner sep=0.75pt]  [font=\scriptsize]  {$k_{1}$};
\draw (548.83,493.67) node [anchor=north west][inner sep=0.75pt]  [font=\scriptsize]  {$k_{2}$};
\draw (493.83,575) node [anchor=north west][inner sep=0.75pt]  [font=\scriptsize]  {$v_{1}$};
\draw (558.83,575) node [anchor=north west][inner sep=0.75pt]  [font=\scriptsize]  {$v_{2}$};

\filldraw[black] (533.54,536.67) circle (1.5pt) ;
\filldraw[black] (551.25,510.92) circle (1.5pt) ;
\filldraw[black] (514.25,509.92) circle (1.5pt) ;
\filldraw[black] (507.83,572.92) circle (1.5pt) ;
\filldraw[black] (559.25,572.92)  circle (1.5pt) ;

\end{tikzpicture}
    \caption{The graph $\mathcal{C}_{3,2}$.} 
\end{figure}

\begin{lemma} \label{Lemma3.KnK1m}
Let $G$ be a $\mathcal{C}_{n,m}$ graph. Then $\reg {S}/{J_G} = 2$, for any $m \geq 1$ and $n\geq 3$.
\end{lemma}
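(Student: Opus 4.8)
The plan is to recognize $\mathcal{C}_{n,m}$ as a connected block graph with no isolated vertex and to invoke the flower-free formula recorded in Remark~\ref{Rem.FF}. Indeed, the blocks of $\mathcal{C}_{n,m}$ are the complete graph $K_n$ on $\{v_0,\dots,v_{n-1}\}$ together with the $m$ pendant edges $\{v_0,k_i\}$, all of which meet only in the vertex $v_0$; since every block is complete, $\mathcal{C}_{n,m}$ is a block graph. Each of $v_1,\dots,v_{n-1}$ lies only in the maximal clique $K_n$, and each $k_i$ lies only in the maximal clique $\{v_0,k_i\}$, so all of these are free vertices; only $v_0$ belongs to more than one maximal clique. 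Hence $i(\mathcal{C}_{n,m})=1$, and once flower-freeness is verified, Remark~\ref{Rem.FF} gives $\reg S/J_G = i(G)+1 = 2$.

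The crux is therefore to show that $\mathcal{C}_{n,m}$ has no induced \emph{flower} subgraph. In any flower $F_{h,k}(v)$ the central vertex satisfies $cdeg(v)\geq 3$, whereas in $\mathcal{C}_{n,m}$ the vertices $v_1,\dots,v_{n-1},k_1,\dots,k_m$ all have clique degree $1$, so the only possible center is $v_0$ (and only when $m\geq 2$, since $cdeg(v_0)=m+1$). I would then rule out both kinds of petals at $v_0$. A triangle petal must use two clique vertices $v_i,v_j$; but two such triangles $v_0v_iv_j$ and $v_0v_{i'}v_{j'}$ can never appear as induced petals meeting only in $v_0$, because $v_i,v_j,v_{i'},v_{j'}$ are pairwise adjacent inside $K_n$, so the subgraph they induce with $v_0$ is a larger clique, not a pair of triangles. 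A $K_{1,3}$ petal with $v_0$ as its free leaf would require a center $c$ adjacent to $v_0$ and to two further vertices \emph{non}-adjacent to $v_0$; this is impossible, since any candidate $c=v_i$ has all of its neighbours inside $K_n$ and hence adjacent to $v_0$, while each $k_i$ has degree $1$. Thus at most one petal (a single triangle) can be induced at $v_0$, which is fewer than the three petals a flower demands.

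Consequently $\mathcal{C}_{n,m}$ is flower-free, and Remark~\ref{Rem.FF} yields $\reg S/J_G = 2$ for all $n\geq 3$ and $m\geq 1$. The main obstacle is exactly this flower-free verification; everything else is bookkeeping about the block decomposition. As a sanity check, when $m=1$ the graph is $K_n$ glued to the single edge $\{v_0,k_1\}$ at the common free vertex $v_0$, so Remark~\ref{Rem1.GUG} applies directly and gives $\reg S/J_G = \reg S/J_{K_n} + \reg S/J_{K_2} = 1+1 = 2$, using Remark~\ref{rem1.Reg}(a) and the fact that a single edge defines a principal ideal of one quadric. For $m\geq 2$, however, the gluing lemma no longer applies, because the $m+1$ blocks all share $v_0$ and thus violate its empty-triple-intersection hypothesis; this is precisely why routing through the flower-free criterion is the right tool.
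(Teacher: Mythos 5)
Your proposal is correct and follows essentially the same route as the paper, whose entire proof is the one-line citation of Remark~\ref{Rem.FF}; you have simply made explicit the verification the paper leaves implicit, namely that $\mathcal{C}_{n,m}$ is a flower-free connected block graph with $i(\mathcal{C}_{n,m})=1$ (only $v_0$ is internal), so that $\reg S/J_G = i(G)+1 = 2$. Your detailed check that no induced flower can exist (the center would have to be $v_0$, yet at most one triangle petal and no $K_{1,3}$ petal can be induced there) and your $m=1$ sanity check via Remark~\ref{Rem1.GUG} are both sound additions, not deviations.
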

\begin{proof}
The proof follows from Remark \ref{Rem.FF}.
\end{proof}

Next, we obtain the regularity of the product of binomial edge ideals of the disjoint union of paths and a complete graph.
\begin{theorem} \label{Thm3.IJ}
Let $H=\{P_1',\ldots ,P_t'\}$ be a disjoint union of paths and $K_{m}$ be a complete graph on $[m]$ such that:
\begin{enumerate}[(a)]
    \item \label{item1} For any $i$, if $K_m \cap P_i' \neq \emptyset$, then $V(K_m) \cap V(P_i') = v_i$, for some $v_i$ which is free vertex in $P_i'$;
    \item \label{item2} $V(K_m) \cap V(P_i') \cap V(P_j')= \emptyset$, for all distinct $i$ and $j$.
\end{enumerate}
Let $n$ be the number of edges in $H$. Then, for any $n \geq 1$ and for any $m \geq 2$, we have 
$$\reg \frac{S}{J_{H}J_{K_{m}}} = 2+n.$$
\end{theorem}
\begin{proof}
    Let $E(H)=\{e_1,\ldots,e_n\}$ be the edges of the disjoint union of paths. First, we claim $(J_{H}J_{K_{m}}) = (J_H) \cap (J_{K_{m}})$. Clearly the inclusion  $(J_{H}J_{K_{m}}) \subseteq (J_H) \cap (J_{K_{m}})$,  holds.

For other side inclusion, let $ x \notin (J_{H}J_{K_{m}})$ and $x \in J_H$. Then $x = \sum_{i=1}^{n} q_if_{e_i}$, where $q_i \in S$. Now suppose $x \in J_{K_{m}}$ and $q_j \notin J_{K_{m}}$, where $j=1,\ldots,n$. Then from \cite[Theorem 3.7]{MS} it follows that there exists a path from $\alpha_j$ to $\beta_j$ in $K_m$, where $\{\alpha_j,\beta_j\}= e_j$. This is not possible by hypothesis (\ref{item1}) and (\ref{item2}). Therefore, $q_j \in J_{K_{m}}$ this implies that $x \in (J_{H}J_{K_{m}})$ which is a contradiction. Similarly, let $ x \notin (J_{H}J_{K_{m}})$ and $x \in J_{K_{m}}$. Then $x = \sum_{k_i \in J_{K_m}} r_ik_i$, where $r_i \in S$. Now, suppose $x \in J_H$ and $r_i \notin J_{H}$. Then from \cite[Theorem 3.7]{MS} it follows that there exists a path from $\alpha_i$ to $\beta_i$ in $H$, where $\{\alpha_i,\beta_i\}$ is an edge associated with an edge binomial $k_i \text{ for any } i$. This is not possible by hypothesis (\ref{item1}) and (\ref{item2}). Therefore, $r_i \in J_H$ this implies that $x \in (J_{H}J_{K_{m}})$ which is a contradiction. Hence, $(J_{H}J_{K_{m}}) \supseteq (J_H) \cap (J_{K_{m}})$, holds.

 Consider the following  short exact sequences:
\begin{equation}\label{eq:1}
    0 \longrightarrow \frac{S}{(J_{H}\cap J_{K_{m}})} \longrightarrow \frac{S}{J_{H}} \oplus \frac{S}{J_{K_{m}}}   \longrightarrow \frac{S}{(J_{H}+J_{K_{m}})}   \longrightarrow  0.
\end{equation}

From Remark \ref{Rem1.GUG} and Remark \ref{rem1.Reg}  it follows that $\reg S/J_H = n$ and $\reg S/J_{K_m} = 1$. From Remark \ref{Rem1.GUG} and Remark \ref{Rem.FF} it follows that $\reg S/(J_H+J_{K_m}) = n+1$, since the graph $H \cup K_m$ is flower-free. Applying Lemma \ref{Lemma1.Reg}(b) to the short exact sequence (\ref{eq:1}) we get 
$$\reg {S}/{(J_{H}\cap J_{K_{m}})}=\reg {S}/{(J_{H}J_{K_{m}})} =n+2.$$
\end{proof}

\section{Regularity of powers of \textit{d}-sequence binomial edge ideal}
In this section, we obtain precise expressions for the regularity of powers of the binomial edge ideals of $d$-sequence graphs. 

\vspace{2mm}

 The authors in \cite{JNR} computed the regularity of binomial edge ideals of $\mathcal{T}_{m}$ graphs. We now calculate the regularity of powers of such binomial edge ideals.
\begin{theorem} \label{Thm4.T1}
Let $G$ be a $\mathcal{T}_{m}$ graph on $[n+1]$. Let  $d_1,\ldots,d_n$ be a sequence of edge binomials of $G$ such that $d_1,\ldots,d_n$ forms a $d$-sequence. Set $d_0 =  0 \in S$. Then, for all $ s \geq 1$, and $m \geq 2$, and $i=0,1,\ldots,n-1$, we have
$$\reg \frac{S}{(d_1,\ldots,d_{i})+ J_{G}^{s}} = 2s + \sum_{j=1}^m{s_{(j)}},$$
where $s_{(j)}$ as defined in Notation \ref{defPK}.
In particular, $\reg {S}/{J_{G}^s} = 2s + \sum_{j=1}^{m}{s_{(j)}}$, for all $s\geq 1$.
\end{theorem}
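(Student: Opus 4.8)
The plan is to prove the formula
\[
\reg \frac{S}{(d_1,\ldots,d_{i})+ J_{G}^{s}} = 2s + \sum_{j=1}^m{s_{(j)}}
\]
by a double induction: an outer induction on the power $s$ and an inner (descending or ascending) induction on the colon index $i$. The base case $s=1$ is exactly Lemma~\ref{Thm3.T2} together with $i(G)=\sum_{j=1}^m s_{(j)}+1$, so that $\reg S/J_G = i(G)+1 = \sum_{j=1}^m s_{(j)}+2 = 2\cdot 1 + \sum_{j=1}^m s_{(j)}$; I would also record the trivial cases where the ideal $(d_1,\ldots,d_i)+J_G^s$ is handled directly. The engine driving the induction is the short exact sequence
\[
0 \longrightarrow \frac{S}{\big((d_0,\ldots,d_{i})+J_G^{s}\big):d_{i+1}}(-2) \longrightarrow \frac{S}{(d_0,\ldots,d_{i})+J_G^{s}} \longrightarrow \frac{S}{(d_0,\ldots,d_{i+1})+J_G^{s}} \longrightarrow 0,
\]
together with the regularity lemma (Lemma~\ref{Lemma1.Reg}) to propagate the regularity values across the three modules.

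The key simplification comes from the $d$-sequence hypothesis via Lemma~\ref{Lemma1.PD}: since $d_1,\ldots,d_n$ form a $d$-sequence, one has
\[
\big((d_0,\ldots,d_{i})+J_G^{s}\big):d_{i+1} = \big((d_0,\ldots,d_{i}):d_{i+1}\big) + J_G^{s-1}.
\]
This rewrites the leftmost module as $S/\big(((d_0,\ldots,d_i):d_{i+1}) + J_G^{s-1}\big)$, dropping the power by one and thereby hooking into the outer induction on $s$. The next step is to identify the colon ideal $(d_0,\ldots,d_i):d_{i+1}$ combinatorially: using the Technical Lemma~\ref{Lemma2.TL} and the colon formulas of Remark~\ref{Rem1.MCI} and Remark~\ref{Rem1.CI}, this colon is either $(d_0,\ldots,d_i)$ itself (when $d_{i+1}$ is attached to the path part, so $G_e=G$ and no new edges appear) or $(d_0,\ldots,d_i)$ enlarged by the edge binomials $\langle f_{kl} \mid \{k,l\}\in N(k_0)\rangle$ of a clique forming at the center $k_0$ (when $d_{i+1}$ is one of the last $m$ generators joining the center). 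In the first regime the leftmost module has regularity $2(s-1)+\sum_j s_{(j)}$ by the inductive hypothesis, while the middle/right terms carry the target value $2s+\sum_j s_{(j)}$; in the second regime the colon ideal is the binomial edge ideal of a graph that is a disjoint union of paths glued to a complete graph (a $\mathcal{C}_{n,m}$-type configuration), and its regularity with the $J_G^{s-1}$ term is governed precisely by Theorem~\ref{Thm3.IJ} and Lemma~\ref{Lemma3.KnK1m}.

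I expect the main obstacle to be the bookkeeping in the second regime, i.e.\ verifying that once the center $k_0$ acquires enough neighbors to form a clique, the resulting colon module $S/\big(((d_0,\ldots,d_i):d_{i+1})+J_G^{s-1}\big)$ has exactly the regularity predicted by the product formula of Theorem~\ref{Thm3.IJ} rather than something larger. Concretely, I must track how the clique $\langle f_{kl}\mid \{k,l\}\in N(k_0)\rangle$ interacts with the power $J_G^{s-1}$ and confirm that the numerics match $2(s-1)+\sum_j s_{(j)}$ so that the regularity lemma selects the correct value at the equality threshold (the clause ``the equality holds if $\reg M \neq \reg P + 1$'' must be checked, not merely the inequality). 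Once both regimes give a leftmost regularity that is one less than the target and the right-hand term is handled inductively on $i$, Lemma~\ref{Lemma1.Reg}(a) forces the middle module to have regularity $2s+\sum_j s_{(j)}$, completing the induction; specializing to $i=0$ yields $\reg S/J_G^s = 2s+\sum_{j=1}^m s_{(j)}$.
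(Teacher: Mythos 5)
Your outline reproduces the paper's strategy almost exactly: outer induction on $s$ (base $s=1$ from the known regularity of $J_G$), descending inner induction on $i$, the short exact sequence obtained by coloning out $d_{i+1}$, Lemma \ref{Lemma1.PD} to lower the power from $J_G^{s}$ to $J_G^{s-1}$, and the dichotomy between the path regime (colon unchanged) and the clique regime at the center $k_0$. But the two points you explicitly defer are precisely where the paper has to do genuine work, and your sketch supplies no mechanism for either. In the clique regime you must compute $\reg S/(I+J)$ with $I=(d_1,\ldots,d_i)+J_G^{s-1}$ and $J=J_{K_n}$; merely citing Theorem \ref{Thm3.IJ} does not suffice, since that theorem computes the regularity of a \emph{product} $J_HJ_{K_m}$, not of a sum. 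The paper's bridge is the Mayer--Vietoris sequence $0 \to S/(I\cap J) \to S/I \oplus S/J \to S/(I+J) \to 0$ together with the Claim that $I\cap J = J\cdot\langle x_{k_0},y_{k_0},J_{\mathcal{P}}\rangle$, proved by a two-sided inclusion (the identities $x_lf_{kk_0}-x_kf_{lk_0}=x_{k_0}f_{lk}$ and $y_lf_{kk_0}-y_kf_{lk_0}=y_{k_0}f_{lk}$ for one direction, and a support argument from the disjointness of the relevant edge sets for the other). Identifying the intersection as a product is the only way Theorem \ref{Thm3.IJ} enters, and it is absent from your proposal.

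Second, your boundary cases are not ``trivial'' and, as written, the induction stalls. The base of the descending induction, $i=n-1$, is the paper's Lemma \ref{Lemma4.T1p2}: one needs $(d_1,\ldots,d_{n-1})+J_G^{s}=(d_1,\ldots,d_{n-1},d_n^{s})$ and the $d$-sequence consequence $(d_1,\ldots,d_{n-1}):d_n^{s}=(d_1,\ldots,d_{n-1}):d_n$, followed by a direct computation; nothing in your plan produces this starting value. More seriously, you correctly flag the equality clause of Lemma \ref{Lemma1.Reg} but do not resolve it: with $\reg S/I = 2(s-1)+\sum_j s_{(j)}$, $\reg S/J = 1$, and $\reg S/(I\cap J) = 2+\sum_j s_{(j)}$, the criterion $\reg S/(I\cap J) \neq \max\{\reg S/I,\reg S/J\}$ fails exactly when $s=2$, so the Mayer--Vietoris step proves nothing there and your outer induction never gets past its first nontrivial step. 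The paper handles this by a separate, self-contained treatment of $s=2$ (Lemma \ref{Lemma4.T1s=2}), done by its own descending induction on $i$ in which the colon ideal is recognized as the binomial edge ideal of a gluing of $\mathcal{C}_{n,m-n+1}$ with paths, whose regularity comes from Lemma \ref{Lemma3.KnK1m} and Remark \ref{Rem1.GUG}. Without the intersection Claim and a separate $s=2$ argument, the proposal is a correct skeleton of the paper's proof but not a proof.
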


The edge binomials of $\mathcal{T}_{m}$ graph form a $d$-sequence, and we take $d_1,\ldots,d_n$ to be in the same order as in Theorem \ref{Thm2.PtK1m}. 
To complete the proof of Theorem \ref{Thm4.T1}, we need to prove a  couple of lemmata.

\begin{lemma} \label{Lemma4.T1s=2}
Under the assumption in Theorem \ref{Thm4.T1}, and for any $i=0,1,\ldots,n-1$, we have
$$\reg \frac{S}{(d_1,\ldots,d_i)+J_{G}^2} = 4 + \sum_{j=1}^m{s_{(j)}}.$$
\end{lemma}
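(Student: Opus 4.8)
\textbf{Proof proposal for Lemma \ref{Lemma4.T1s=2}.}

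The plan is to prove the statement by descending induction on $i$, running from $i = n-1$ down to $i = 0$, using the colon-ideal machinery supplied by the $d$-sequence hypothesis together with the regularity lemma and the product-regularity formula of Theorem \ref{Thm3.IJ}. For each $i$ I would work with the short exact sequence
\begin{equation*}
    0 \longrightarrow \frac{S}{\left((d_1,\ldots,d_{i-1})+J_G^2\right):d_i}(-2) \longrightarrow \frac{S}{(d_1,\ldots,d_{i-1})+J_G^2} \longrightarrow \frac{S}{(d_1,\ldots,d_i)+J_G^2} \longrightarrow 0,
\end{equation*}
and read off the regularity of the middle module from the two outer ones via Lemma \ref{Lemma1.Reg}. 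The key simplification is Lemma \ref{Lemma1.PD}: since $d_1,\ldots,d_n$ is a $d$-sequence, the colon on the left collapses to $\left((d_0,\ldots,d_{i-1}):d_i\right)+J_G$, so the left-hand module is governed by a first power of $J_G$ rather than a square, which is exactly what makes the induction close.

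First I would establish the base case $i = n-1$, where the quotient $S/\left((d_1,\ldots,d_{n-1})+J_G^2\right)$ should be computed directly; here only the last edge binomial $d_n$ (at the center $k_0$) together with $J_G^2$ remains, and the structure is simple enough to evaluate the regularity by hand or by Remark \ref{Rem1.GUG}. Next, for the inductive step at a generic $i$ I would identify the colon ideal $\left((d_0,\ldots,d_{i-1}):d_i\right)$ explicitly using the graph-deformation description from Remark \ref{Rem1.MCI} and the Intersection Property of Lemma \ref{Lemma2.TL}: when $d_i$ is a path edge the colon is trivial (Remark \ref{Rem1.CI}), and when $d_i$ is a spoke at the center $k_0$ the colon introduces the clique-type relations $\langle f_{kl}\mid \{k,l\}\in N(k_0)\rangle$. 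This is precisely where the associated graph of the left-hand module becomes a disjoint union of paths glued to a complete graph, so that its regularity (after tensoring appropriately) is controlled by Theorem \ref{Thm3.IJ} and the gluing formula of Remark \ref{Rem1.GUG}.

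The arithmetic I expect to fall out is that the left-hand (colon) module contributes regularity $2 + \sum_j s_{(j)}$ — coming from one power of $J_G$ together with the $n-1-i$ path edges already adjoined and the complete-graph piece — while by the induction hypothesis the right-hand module has regularity $4 + \sum_j s_{(j)}$; after the degree shift by $2$ on the left, both outer modules sit at regularity $4 + \sum_j s_{(j)}$, and Lemma \ref{Lemma1.Reg}(a) forces the middle to match, propagating the formula down to $i = 0$. The main obstacle will be the bookkeeping in the inductive step: one must verify that at each stage the ideal $\left((d_1,\ldots,d_{i-1}):d_i\right)+J_G$ really is the binomial edge ideal of the expected glued graph (a disjoint union of paths together with a clique at $k_0$), so that Theorem \ref{Thm3.IJ} applies verbatim, and one must check that the regularity of the colon module lands at exactly the value that makes the equality case of the regularity lemma (the condition $\reg M \neq \reg P + 1$) hold, rather than merely an inequality. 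Pinning down that the two regularities never collide in the ``bad'' way — and handling the boundary index where $d_i$ switches from a path edge to a center spoke — is the delicate part; everything else is a routine unwinding of the colon computations already rehearsed in the proof of Theorem \ref{Thm2.PtK1m}.
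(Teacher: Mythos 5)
Your inductive step is, in substance, the paper's own argument: the same short exact sequence, the collapse $((d_1,\ldots,d_{i-1})+J_G^2):d_i = ((d_1,\ldots,d_{i-1}):d_i)+J_G$ via Lemma \ref{Lemma1.PD}, the identification of this colon ideal as the binomial edge ideal of a graph obtained by gluing a $\mathcal{C}_{t,m-t+1}$-type graph with paths at free vertices (Remark \ref{Rem1.MCI} together with Lemma \ref{Lemma2.TL}, with Remark \ref{Rem1.CI} handling the path-edge indices), its regularity $2+\sum_{j=1}^m s_{(j)}$ via Lemma \ref{Lemma3.KnK1m} and Remark \ref{Rem1.GUG}, and finally Lemma \ref{Lemma1.Reg}(a). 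One correction, though: Theorem \ref{Thm3.IJ} plays no role at $s=2$. The colon module here is the \emph{sum} $J_G + J_{K_t}$, which is itself a binomial edge ideal of a glued graph, not a product of ideals; the product-regularity formula is only needed for $s\geq 3$ in the proof of Theorem \ref{Thm4.T1}, where one must control $I\cap J_{K_t}$ for $I=(d_1,\ldots,d_i)+J_G^{s-1}$ via a Mayer--Vietoris sequence. Importing it into this lemma suggests a misreading of which tool does the work at which power.

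The genuine gap is the base case $i=n-1$, which you wave off as ``simple enough to evaluate by hand or by Remark \ref{Rem1.GUG}.'' It is not: $(d_1,\ldots,d_{n-1})+J_G^2=(d_1,\ldots,d_{n-1},d_n^2)$ is not a binomial edge ideal (it contains the square $d_n^2$), so the gluing formula does not apply to it, and this is exactly where the precise value $4+\sum_{j=1}^m s_{(j)}$ --- rather than a mere upper bound --- must first be produced. The paper does this in two moves that your plan omits. First, coloning by $d_n$ in a preliminary exact sequence gives the bound $\reg S/(d_1,\ldots,d_{n-1}) \leq 2+\sum_{j=1}^m s_{(j)}$. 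Second, one colons by $d_n^2$ and uses the $d$-sequence identity $(d_1,\ldots,d_{n-1}):d_n^2=(d_1,\ldots,d_{n-1}):d_n$, so the left-hand module is $S/J_{G'}(-4)$ for the known glued graph $G'$ (paths glued to $K_m$), of regularity $5+\sum_{j=1}^m s_{(j)}$; since this strictly exceeds the bound on the middle module, the equality case of Lemma \ref{Lemma1.Reg}(c) applies and yields $\reg S/(d_1,\ldots,d_{n-1},d_n^2)=4+\sum_{j=1}^m s_{(j)}$ exactly. Without this quadratic-colon step and the auxiliary bound on $\reg S/(d_1,\ldots,d_{n-1})$, your descending induction has no exact anchor: every subsequent use of Lemma \ref{Lemma1.Reg}(a) would propagate only an inequality, and the possible collision $\reg M = \reg P + 1$ you yourself flag could not be ruled out.
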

\begin{proof}
Consider the following short exact sequence 
\begin{equation} \label{eq:E1}
    0 \longrightarrow \frac{S}{(d_1,\ldots,d_{n-1}):d_n}(-2) \longrightarrow \frac{S}{(d_1,\ldots,d_{n-1})} 
  \longrightarrow \frac{S}{(d_1,\ldots,d_{n})}   \longrightarrow  0. 
\end{equation}

From Remark \ref{Rem1.MCI}, it follows that ideal  $((d_1,\ldots,d_{n-1}):d_n)$ is binomial edge ideal of graph obtained by gluing ${K_{m}}$ and paths at free vertices. Thus from Remark \ref{Rem1.GUG} it follows that $\reg {S}/{((d_1,\ldots,d_{n-1}):d_n)} = 1 + \sum_{j=1}^{m}s_{(j)}$. From Remark \ref{rem1.Reg}(b) it follows that $\reg {S}/{J_G} = 2 + \sum_{j=1}^{m}s_{(j)}$. Applying Lemma \ref{Lemma1.Reg}(a) to the exact sequence (\ref{eq:E1}), yields that 
  $$\reg \frac{S}{(d_1,\ldots,d_{n-1})} \leq 2 + \sum_{j=1}^{m}s_{(j)}.$$
Now, the proof of the lemma is by descending induction on $i$.
For $i=n-1$, Consider the short exact sequence 
\begin{equation} \label{eq:E2}
    0 \longrightarrow \frac{S}{(d_1,\ldots,d_{n-1}):d_{n}^{2}}(-4) \longrightarrow \frac{S}{(d_1,\ldots,d_{n-1})} \longrightarrow \frac{S}{(d_1,\ldots,d_{n-1},d_{n}^{2})}   \longrightarrow  0.
\end{equation}

Since sequence of edge binomials $d_1,\ldots,d_{n}$ form a $d$-sequence, one has   $(d_1,\ldots,d_{n-1}):d_{n}^{2}=(d_1,\ldots,d_{n-1}):d_{n}$. As $\reg \frac{S}{(d_1,\ldots,d_{n-1}):d_{n}^{2}}(-4) = 5 + \sum_{j=1}^{m}s_{(j)} > \reg \frac{S}{(d_1,\ldots,d_{n-1})}$,  it follow from Lemma \ref{Lemma1.Reg}(c) that 
\[
\reg \frac{S}{(d_1,\ldots,d_{n-1})+J_{G}^{2}} = \reg \frac{S}{(d_1,\ldots,d_{n-1})+d_{n}^{2}} = 4 + \sum_{j=1}^{m}s_{(j)}.
\]
Assume that the assertion holds for $j$. Consider the short exact sequence 
\begin{equation} \label{eq:E3}
\begin{split}
    0 \longrightarrow \frac{S}{((d_1,\ldots,d_{j-1})+J_{G}^{2}):d_{j}}(-2) & \longrightarrow \frac{S}{(d_1,\ldots,d_{j-1})+J_{G}^{2}} \\
 & \longrightarrow \frac{S}{(d_1,\ldots,d_{j})+J_{G}^{2}}   \longrightarrow  0. 
\end{split}
\end{equation}

From Lemma \ref{Lemma1.PD}, it follows that  $((d_1,\ldots,d_{j-1})+J_{G}^{2}):d_{j} = ((d_1,\ldots,d_{j-1}):d_{j})+J_{G}$. From Remark \ref{Rem1.MCI}, $((d_1,\ldots,d_{j-1}):d_{j})+J_{G} = J_{G} + ( f_{lk} \mid \{l,k\} \in N_{H_{i}}(\alpha_{d_{i+1}})$ or $\{l,k\} \in N_{H_{i}}(\beta_{d_{i+1}}))$, is the binomial edge ideal of graph obtained by gluing $\mathcal{C}_{n,m-n+1}$ and paths at free vertices, where $n<m$. Thus, from Lemma \ref{Lemma3.KnK1m}, and Remark \ref{Rem1.GUG}, it follows that $ \reg {S}/{(((d_1,\ldots,d_{j-1}):d_{j})+J_{G})} = 2 + \sum_{j=1}^{m}s_{(j)}$. By the induction hypothesis conclude that $\reg {S}/{((d_1,\ldots,d_{j})+J_{G}^{2})}= 4 + \sum_{j=1}^{m}s_{(j)}$. Then by applying Lemma \ref{Lemma1.Reg}(a) to (\ref{eq:E3}), we get 
\[
\frac{S}{(d_1,\ldots,d_{j-1})+J_{G}^{2}}=4 + \sum_{j=1}^{m}s_{(j)},
\]
as desired.
\end{proof}

\begin{lemma} \label{Lemma4.T1p2}
Under the assumption in Theorem \ref{Thm4.T1}, and for all $s>1$, we have 
$$\reg \frac{S}{(d_1,\ldots,d_{n-1})+ d_{n}^{s}} = 2s + \sum_{j=1}^m{s_{(j)}}.$$
\end{lemma}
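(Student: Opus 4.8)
The plan is to derive the value from a single short exact sequence that isolates $d_n^s$, namely
\[
0 \longrightarrow \frac{S}{(d_1,\ldots,d_{n-1}):d_n^{s}}(-2s) \longrightarrow \frac{S}{(d_1,\ldots,d_{n-1})} \longrightarrow \frac{S}{(d_1,\ldots,d_{n-1},d_n^{s})} \longrightarrow 0,
\]
where the shift is $-2s$ because each edge binomial has degree $2$, so $\deg(d_n^{s}) = 2s$. The regularities of the two flanking modules pin down the regularity of the middle cokernel $S/\big((d_1,\ldots,d_{n-1}) + d_n^{s}\big)$ through the regularity lemma, and this is exactly the module whose regularity we want.

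First I would establish that the colon stabilizes, that is, $(d_1,\ldots,d_{n-1}):d_n^{s} = (d_1,\ldots,d_{n-1}):d_n$ for every $s \geq 1$. Writing $J = (d_1,\ldots,d_{n-1})$ and applying Definition \ref{Def-d-sequence} with $i = n-1$ and $j = n$ gives $(J:d_n^{2}) = (J:d_n)$; iterating, $(J:d_n^{s}) = \big((J:d_n^{s-1}):d_n\big) = (J:d_n)$ follows by induction on $s$. The regularity of this colon has already been computed inside the proof of Lemma \ref{Lemma4.T1s=2}: since $J:d_n$ is (by Remark \ref{Rem1.MCI}) the binomial edge ideal of the graph obtained by gluing $K_m$ and paths at free vertices, Remark \ref{Rem1.GUG} yields $\reg S/(J:d_n) = 1 + \sum_{j=1}^{m} s_{(j)}$. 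After the degree shift this gives $\reg \frac{S}{J:d_n^{s}}(-2s) = 2s + 1 + \sum_{j=1}^{m} s_{(j)}$. I would also recall from the opening paragraph of Lemma \ref{Lemma4.T1s=2} the bound $\reg S/J \leq 2 + \sum_{j=1}^{m} s_{(j)}$.

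Finally I would apply Lemma \ref{Lemma1.Reg}(c) to the sequence above, obtaining
\[
\reg \frac{S}{J + d_n^{s}} \leq \max\Big\{\, 2s + \textstyle\sum_{j=1}^{m} s_{(j)}, \; \reg S/J \,\Big\}.
\]
For $s > 1$ one has $2s + \sum_{j} s_{(j)} \geq 4 + \sum_{j} s_{(j)} > 2 + \sum_{j} s_{(j)} \geq \reg S/J$, so the first term strictly dominates. Moreover the regularity of the left-hand module, $2s + 1 + \sum_{j} s_{(j)}$, strictly exceeds that of the middle module, so $\reg M \neq \reg N$ and the equality clause of Lemma \ref{Lemma1.Reg}(c) applies, forcing $\reg S/(J + d_n^{s}) = 2s + \sum_{j=1}^{m} s_{(j)}$, as claimed.

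The only genuinely delicate point is the colon stabilization $(d_1,\ldots,d_{n-1}):d_n^{s} = (d_1,\ldots,d_{n-1}):d_n$, which is precisely where the $d$-sequence hypothesis is indispensable; once it is in hand, the degree shift $2s$ on the left-hand term drives the entire computation and the remainder is routine bookkeeping with the regularity lemma.
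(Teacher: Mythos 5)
Your proposal is correct and follows essentially the same route as the paper: the identical short exact sequence with the $(-2s)$ twist, the colon stabilization $(d_1,\ldots,d_{n-1}):d_n^{s}=(d_1,\ldots,d_{n-1}):d_n$ from the $d$-sequence property, the regularity inputs $\reg S/((d_1,\ldots,d_{n-1}):d_n)=1+\sum_{j=1}^{m}s_{(j)}$ and $\reg S/(d_1,\ldots,d_{n-1})\leq 2+\sum_{j=1}^{m}s_{(j)}$ carried over from Lemma \ref{Lemma4.T1s=2}, and the conclusion via Lemma \ref{Lemma1.Reg}(c). You merely make explicit two steps the paper leaves implicit (the induction giving the colon stabilization for all $s$, and the verification that $\reg M - 1$ strictly dominates so the equality clause of the regularity lemma applies), which is a faithful filling-in rather than a different argument.
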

\begin{proof}
Consider the following short exact sequence of graded modules:
\begin{equation} \label{eq:E4}
    0 \longrightarrow \frac{S}{(d_1,\ldots,d_{n-1}):d_{n}^{s}}(-2s) \longrightarrow \frac{S}{(d_1,\ldots,d_{n-1})} 
  \longrightarrow \frac{S}{(d_1,\ldots,d_{n-1})+d_{n}^{s}}   \longrightarrow  0. 
\end{equation}
Since $d_1,\ldots,d_{n}$ form a $d$-sequence, one has $(d_1,\ldots,d_{n-1}): d_{n}^{s} = (d_1,\ldots,d_{n-1}): d_{n}$. From Lemma \ref{Lemma4.T1s=2}, we know the regularity of modules ${S}/{((d_1,\ldots,d_{n-1}):d_n)}$ and ${S}/{(d_1,\ldots,d_{n-1})}$. Then from Lemma \ref{Lemma1.Reg}(c) it follows that $\reg {S}/{((d_1,\ldots,d_{n-1})+d_{n}^{s})} = 2s + \sum_{j=1}^{m}s_{(j)}$.
\end{proof}

\begin{proof}[Proof of Theorem~\ref{Thm4.T1}] 
The proof is by induction on $s$. For $s=1$, the statement follows from Remark \ref{rem1.Reg}(b).  We may assume that the assertion holds for $s-1$. The proof for the statement $s$ is by descending induction on $i$. For $i=n-1$, the statement holds by Lemma \ref{Lemma4.T1p2}. Next, assume that the assertion holds for $i+1$. Consider the short exact sequence
\begin{equation} \label{eq:T1}
\begin{split}
     0 \longrightarrow \frac{S}{(d_1,\ldots,d_{i})+J_{G}^{s}:d_{i+1}}(-2) & \longrightarrow \frac{S}{(d_1,\ldots,d_{i})+J_{G}^{s}} \\
      & \longrightarrow \frac{S}{(d_1,\ldots,d_{i+1})+J_{G}^{s}}   \longrightarrow  0. 
\end{split}
\end{equation}
By the induction hypothesis on $i$ that $\reg {S}/{(d_1,\ldots,d_{i+1})+J_{G}^{s}} = 2s + \sum_{j=1}^m{s_{(j)}}$. By Lemma \ref{Lemma1.PD} one has $((d_1,\ldots,d_{i})+J_{G}^{s}):d_{i+1} = ((d_1,\ldots,d_{i}):d_{i+1})+J_{G}^{s-1}$. By Remark \ref{Rem1.MCI}, it follows that $((d_1,\ldots,d_{i}):d_{i+1})+J_{G}^{s-1} =(d_1,\ldots,d_{i})+J_{G}^{s-1} + ( f_{lk} \mid \{l,k\} \in N_{H_{i}}(\alpha_{d_{i+1}})$ or $\{l,k\} \in N_{H_{i}}(\beta_{d_{i+1}}))$. Note that $( f_{lk} \mid \{l,k\} \in N_{H_{i}}(\alpha_{d_{i+1}})$ or $\{l,k\} \in N_{H_{i}}(\beta_{d_{i+1}})) = J_{K_{n}}$, for some $n < m$, by Lemma \ref{Lemma2.TL}. 
Set $I= (d_1,\ldots,d_{i})+J_{G}^{s-1}$ and $J=J_{K_{n}}$. Consider the two cases: 

\vspace{2mm}

\textbf{Case 1.}
 If  $((d_1,\ldots,d_{i}):d_{i+1})+J_{G}^{s-1} = I$, then from the induction hypothesis on $s$, it follows that $\reg {S}/{I} = 2(s-1) + \sum_{j=1}^m{s_{(j)}}$. Now, using Lemma \ref{Lemma1.Reg}(a) to the short exact sequence (\ref{eq:T1}) yields $$\reg \frac{S}{(d_1,\ldots,d_{i})+J_{G}^{s}}= 2s + \sum_{j=1}^m{s_{(j)}}.$$
 
\vspace{2mm}

\textbf{Case 2.}
 If $((d_1,\ldots,d_{i}):d_{i+1})+J_{G}^{s-1} =I + J$, then from the following short exact sequence 
 \begin{equation} \label{eq:IJ}
    0 \longrightarrow \frac{S}{I \cap J} \longrightarrow \frac{S}{I} \oplus \frac{S}{J}
  \longrightarrow \frac{S}{I+J}   \longrightarrow  0,
 \end{equation}
one has, 
$$\reg{\frac{S}{I+J}} = \max \{\reg{\frac{S}{I}}, \reg{\frac{S}{J}}, \reg{\frac{S}{I\cap J}}-1\},$$
if $\reg{{S}/{(I\cap J)}} \neq \max \{\reg{{S}/{I}},\reg{{S}/{J}}\}$. 
From the induction hypothesis on $s$, it follows that $\reg {S}/{I} = 2(s-1) + \sum_{j=1}^m{s_{(j)}}$. Since $J$ is a complete graph, from Remark \ref{rem1.Reg}(a) it follows that $\reg {S}/{J} = 1$. 
\begin{claim} \label{Claim1}
$I \cap J = J \cdot( x_{k_{0}},y_{k_{0}},J_{\mathcal{P}} )$, where $\mathcal{P}$ denotes the paths of $G$ and $k_0$ denote the center of $G$.
\end{claim}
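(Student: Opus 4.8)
The plan is to establish the two inclusions separately; writing $\mathfrak{m}_0 = \langle x_{k_0},y_{k_0}\rangle$ and $\mathfrak a = \langle x_{k_0},y_{k_0},J_{\mathcal P}\rangle$, I read the right-hand side as the product $J\cdot\mathfrak a = \mathfrak m_0 J + J_{\mathcal P}J$. The first observation, valid because we are in Case~2, is that $d_{i+1}$ is a star edge; since all path binomials precede the star binomials in the $d$-sequence (Theorem~\ref{Thm2.PtK1m}), the ideal $\langle d_1,\ldots,d_i\rangle$ already contains $J_{\mathcal P}$ together with the star binomials $f_{k_0 q_a}$ for every $q_a\in N_{H_i}(k_0)=\{q_1,\ldots,q_n\}$, the vertex set of the clique defining $J=J_{K_n}$. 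In particular $J_{\mathcal P}\subseteq I$ and each $f_{k_0 q_a}\in I$.

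The inclusion $\supseteq$ is then routine. Membership in $J$ is clear for $\mathfrak m_0 J$ and $J_{\mathcal P}J$, so it suffices to see both lie in $I$. For $J_{\mathcal P}J$ this is immediate from $J_{\mathcal P}\subseteq I$. For $\mathfrak m_0 J$ I would use the Pl\"ucker-type identities on the $2\times 3$ submatrix with columns $k_0,q_a,q_b$,
\[
x_{k_0} f_{q_a q_b} = x_{q_a} f_{k_0 q_b} - x_{q_b} f_{k_0 q_a}, \qquad y_{k_0} f_{q_a q_b} = y_{q_a} f_{k_0 q_b} - y_{q_b} f_{k_0 q_a};
\]
since $f_{k_0 q_a},f_{k_0 q_b}\in\langle d_1,\ldots,d_i\rangle\subseteq I$, the right-hand sides lie in $I$, giving $\mathfrak m_0 J\subseteq I$.

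For the inclusion $\subseteq$, let $w\in I\cap J$ be homogeneous. Grade $S=S'[x_{k_0},y_{k_0}]$ (with $S'$ the subring on the remaining variables) by degree in $x_{k_0},y_{k_0}$, and split $w=w_0+w_+$ with $w_0\in S'$ and $w_+\in\mathfrak m_0 S$. As $J=J_{K_n}$ is generated inside $S'$ it is homogeneous for this grading, so $w_0,w_+\in J$; moreover $w_+\in J\cap\mathfrak m_0 S=\mathfrak m_0 J\subseteq J\mathfrak a$, which disposes of $w_+$. To handle $w_0$, apply the evaluation $\phi\colon S\to S'$, $x_{k_0},y_{k_0}\mapsto 0$. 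One checks $\phi(I)=J_{\mathcal P}$: the homomorphism fixes the path binomials, annihilates every $f_{k_0 q_a}$, and sends $J_G^{s-1}$ to $J_{\mathcal P}^{\,s-1}\subseteq J_{\mathcal P}$ (recall $s\ge 2$). Since $\phi(w)=w_0$ and $w\in I$, this yields $w_0\in J_{\mathcal P}$, and as $w_0\in J$ already we obtain $w_0\in J_{K_n}\cap J_{\mathcal P}$.

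The crux, and the step I expect to be the main obstacle, is then the transversality
\[
J_{K_n}\cap J_{\mathcal P} = J_{K_n}\cdot J_{\mathcal P} \quad\text{in } S',
\]
which completes the argument because $J_{K_n}J_{\mathcal P}\subseteq J\mathfrak a$. I would prove it by showing $\Tor_1^{S'}(S'/J_{K_n},S'/J_{\mathcal P})=0$. Since the binomial edge ideal of a disjoint union of paths is generated by a regular sequence, it is enough to verify that the path binomials form a regular sequence on $S'/J_{K_n}$. Here $J_{K_n}$ is a prime determinantal ideal, so $S'/J_{K_n}$ is a Cohen--Macaulay domain, and in a Cohen--Macaulay ring a sequence is regular exactly when its height equals its length; thus I only need the height count $\operatorname{ht}(J_{K_n}+J_{\mathcal P})=\operatorname{ht}(J_{K_n})+\#\{\text{path edges}\}$. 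Via the standard dimension formula $\dim S'/J_H=\max_T\bigl(|V(H)|-|T|+c(T)\bigr)$ this reduces to checking that the block graph $K_n\cup\mathcal P$ (a clique carrying one pendant path at each vertex) satisfies $\dim S'/J_{K_n\cup\mathcal P}=|V|+1$, i.e.\ that no vertex cut $T$ achieves $c(T)-|T|\ge 2$, which is transparent from its tree-of-blocks shape. The delicate bookkeeping is confined to this height/regular-sequence computation; the splitting and the evaluation $\phi$ are purely formal.
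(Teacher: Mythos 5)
Your proposal is correct, and it proves the easy inclusion exactly as the paper does (the same Pl\"ucker-type identities $x_{q_a}f_{k_0q_b}-x_{q_b}f_{k_0q_a}=x_{k_0}f_{q_aq_b}$, together with $J_{\mathcal P}\subseteq\langle d_1,\ldots,d_i\rangle$), but your treatment of the inclusion $I\cap J\subseteq J*\langle x_{k_0},y_{k_0},J_{\mathcal P}\rangle$ is a genuinely different and more structured argument than the paper's. The paper partitions the generators into three edge-disjoint graphs $H_1,H_2,H_3$ (with $I=J_{H_1}+J_{H_2}^{s-1}$, $J=J_{H_3}$) and argues by contradiction: if $x\in J$ lies in $I$, the disjointness of edge supports forces the coefficients in a $J_{H_3}$-representation of $x$ to lie in $I$, whence $x\in IJ\subseteq J*\langle x_{k_0},y_{k_0},J_{\mathcal P}\rangle$; this is short but leans on the unproved assertion that ideal membership passes to coefficients across disjoint supports. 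You instead split $w=w_0+w_+$ using the $(x_{k_0},y_{k_0})$-grading (legitimate since $J$ is extended from $S'$, giving $J\cap\mathfrak m_0S=\mathfrak m_0J$), push $w_0$ into $J_{\mathcal P}$ via the evaluation $\phi$ (valid because $s\ge 2$), and reduce everything to the transversality $J_{K_n}\cap J_{\mathcal P}=J_{K_n}J_{\mathcal P}$, proved by $\Tor_1$-vanishing from the Cohen--Macaulay height count. Each of these steps is standard and checkable, and you get the stronger conclusion $\Tor_i^{S'}(S'/J_{K_n},S'/J_{\mathcal P})=0$ for all $i\ge 1$ for free; the cost is heavier machinery (determinantal CM-ness, the minimal-prime dimension formula) where the paper stays elementary. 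One small correction to your final bookkeeping: the graph $K_n\cup\mathcal P$ is in general \emph{disconnected} --- only $n<m$ of the paths meet the clique, and trivial paths contribute isolated vertices --- so the target is not $\dim S'/J_{K_n\cup\mathcal P}=|V|+1$ but $\dim S'/J_{K_n\cup\mathcal P}=|V|+c$, where $c=1+m-n$ is the number of components; equivalently, the maximum in the dimension formula is attained at $T=\emptyset$ because each vertex removal in this block graph increases the component count by at most one. With $|V|=m+\sum_j s(j)$ this still gives $\operatorname{ht}(J_{K_n}+J_{\mathcal P})=(n-1)+\sum_j s(j)=\operatorname{ht}(J_{K_n})+\#\{\text{path edges}\}$, so your regular-sequence conclusion, and hence the whole argument, goes through unchanged.
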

For $I \cap J \supseteq J \cdot ( x_{k_{0}},y_{k_{0}},J_{\mathcal{P}} )$ inclusion, it is enough to show that $\{f_{lk}x_{k_{0}},f_{lk}y_{k_{0}}\} \in I$ for all $f_{lk} \in J$, since $J_{\mathcal{P}} \subset I$. For any $f_{lk} \in J$ we have $f_{lk_{0}} = d_{i_1}$ and  $f_{kk_{0}} = d_{i_2}$, for some $ i_1,i_2 < i$. This implies that $\{f_{lk_{0}}, f_{kk_{0}}\} \in I$. Then $x_lf_{kk_{0}} - x_kf_{lk_{0}} = x_{k_{0}}f_{lk}$ and $y_lf_{kk_{0}} - y_kf_{lk_{0}} = y_{k_{0}}f_{lk}$, as desired.
For other side inclusion, for $i < n-1$, let  
 $H_1$ be a graph associated to binomials $d_1,\ldots,d_i$,
 $H_2$ be a graph associated to binomials $d_{i+1},\ldots,d_n$, and
 $H_3$ be a graph such that $E(H_{3})=\{k,l\}$ where $\{k,l\} \in N_{H_{1}}(\alpha_{d_{i+1}})$ or $\{k,l\} \in N_{H_{1}}(\beta_{d_{i+1}})$. 
 Notice that $H_{r_1} \cap H_{r_2} = \emptyset$ where $r_1,r_2 \in \{1,2,3\}$,
 $I=J_{H_{1}} + J_{H_{2}}^{s-1}$ and $J=J_{H_{3}}$. 
 
 \vspace{2mm}
 
Assume that $x \notin  J \cdot (x_{k_{0}},y_{k_{0}},J_{\mathcal{P}})$ and $x \in J$. Thus, $x=\sum_{g_i \in J_{H_{3}}} r_ig_i$, where $r_{i} \in S$. Now, suppose $x \in I$, then $r_{i} \in I$ for all $i$, since $(E(H_{1}) \cup E(H_{2}))\cap E(H_{3}) = \emptyset$. Therefore, $x \in IJ$, but $IJ \subset J\cdot ( x_{k_{0}},y_{k_{0}},J_{\mathcal{P}} )$, which is a contradiction. Thus $x\notin I$. In particular, $x\notin I \cap J$.   Now, assume that $x \notin  J \cdot ( x_{k_{0}},y_{k_{0}},J_{\mathcal{P}} )$ and $x \in I$. Thus,  $x=\sum_{g_i \in J_{H_{1}}} r_{i}g_{i} + \sum_{h_k \in {J_{H_{2}}}^{s-1}}t_{k}h_k, \textnormal{where} \ r_{i}, t_{k} \in S$.  Suppose $x \in J$, then $r_{i},t_{k} \in J$ for all $i$ and $k$, since $(E(H_{1}) \cup E(H_{2}))\cap E(H_{3}) = \emptyset$. Therefore, $x \in IJ$, but $IJ \subset J\cdot( x_{k_{0}},y_{k_{0}},J_{\mathcal{P}} )$, which is a contradiction. Thus $x\notin J$.  
  
  \vspace{2mm}
  
From Theorem \ref{Thm3.IJ} it follows that $\reg S/(I \cap J) = 2+ \sum_{j=1}^m{s_{(j)}}$. Then applying regularity lemma to the short exact sequence (\ref{eq:IJ}) yields that for all $s>2$, $$\reg{{S}/{(I+J)}} = 2(s-1)+ \sum_{j=1}^m{s_{(j)}},$$ i.e. 
$$\reg{\frac{S}{(d_1,\ldots,d_{i})+J_{G}^{s}: d_{i+1}}} = 2(s-1)+ \sum_{j=1}^m{s_{(j)}}.$$ 
Thus from Lemma \ref{Lemma1.Reg}(a) we obtain that  $\reg{{S}/{((d_1,\ldots,d_{i})+J_{G}^{s}})} = 2s + \sum_{j=1}^m{s_{(j)}}$, for $s >2$. And for $s=2$ the statement follows from Lemma \ref{Lemma4.T1s=2}. This completes the proof. 
\end{proof}

\begin{remark}
Note that the path graphs and star graphs are $\mathcal{T}_{m}$ graphs, for suitable $s(i)$ and $m$. Using Theorem \ref{Thm4.T1}, we derive the following results.
\begin{enumerate}[a)]
    \item \cite[Theorem 3.5]{JAR20} Let $G = K_{1,n}$ be a star graph for $n \geq 3$. Then, $\reg S/J_{G}^s = 2s$ for all $s \geq 1$. 
    \item \cite[Observation 3.2]{JAR20} Let $G = P_{n}$ be a path graph for $n \geq 2$. Then, $\reg S/J_{G}^s = 2s +n-3$ for all $s \geq  1$.
\end{enumerate}
\end{remark}

\begin{remark} Note that $T$-type graphs discussed in Shen and Zhu \cite{SZ}  are special case of $\mathcal{T}_{m}$ graphs. Hence we obtain  \cite[Theorem 4.7]{SZ} as a particular case of Theorem \ref{Thm4.T1}. 
\end{remark}
Now, we focus on the regularity of powers of the binomial edge ideal of $H_{m}$ graphs.
\begin{theorem} \label{Thm4.T2}
Let $G$ be a $\mathcal{H}_m$ graph on $[n+1]$. Let $d_1,\ldots,d_n$ be a sequence of edge binomials of $G$ such that $d_1,\ldots,d_n$ forms a $d$-sequence. Set $d_0 =  0 \in S$. Then, for any $i=0,1,\ldots,n-1$, and for any $m \geq 3$, and for all $s\geq 1$, we have
$$\reg \frac{S}{(d_1,\ldots,d_i)+J_{G}^s} = 2s + \sum_{j=1}^{m+1}{s_{(j)}} + 1,$$
where $s_{(j)}$ as defined in Notation \ref{Def:Hm}. In particular, $\reg {S}/{J_{G}^s} = 2s + \sum_{j=1}^{m+1}{s_{(j)}} + 1$, for all $s\geq 1$.
\end{theorem}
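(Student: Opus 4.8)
The plan is to mirror the proof of Theorem~\ref{Thm4.T1} almost verbatim, since the $\mathcal{H}_m$ graph differs from the $\mathcal{T}_m$ graph only in that one pendant vertex $k_1$ of the underlying $K_{1,m}$ has been replaced by a vertex of degree $3$ (two extra paths attached at $k_1$ instead of at $k_0$). I would take $d_1,\dots,d_n$ in the order prescribed in Theorem~\ref{Thm2.T2}: first the edge binomials of the paths $\mathcal{P}$, then the $m-1$ binomials $f_{k_0 p_{i_1}}$, and finally $d_{n-1}=f_{k_0 k_1}$ (and the two path-edges emanating from $k_1$ handled among the initial path binomials). The proof proceeds by a double induction, with an outer induction on $s$ and, for fixed $s$, a descending induction on $i$, exactly as in Theorem~\ref{Thm4.T1}.

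**Base cases and the regularity of the colon module.**
For the base case $s=1$, the claim $\reg S/J_G = 2 + \sum_{j=1}^{m+1}s_{(j)} + 1 = i(G)+1$ follows from Lemma~\ref{Thm3.T2} (recall $i(G)=\sum_{j}s_{(j)}+2$ here). I would first establish the analogue of Lemma~\ref{Lemma4.T1s=2}, computing $\reg S/\bigl((d_1,\dots,d_i)+J_G^2\bigr) = 4 + \sum_{j=1}^{m+1}s_{(j)}+1$ by descending induction on $i$, together with the analogue of Lemma~\ref{Lemma4.T1p2} giving $\reg S/\bigl((d_1,\dots,d_{n-1})+d_n^s\bigr)=2s+\sum_{j=1}^{m+1}s_{(j)}+1$; both rely on the $d$-sequence identity $(d_1,\dots,d_{n-1}):d_n^s = (d_1,\dots,d_{n-1}):d_n$ from Lemma~\ref{Lemma1.PD} and on Remark~\ref{Rem1.MCI} to identify the colon ideal as the binomial edge ideal of a block graph glued at free vertices, whose regularity is read off via Remark~\ref{Rem1.GUG} and Lemma~\ref{Thm3.T2}.

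**The inductive step and the two cases.**
For the inductive step I would use the short exact sequence
\begin{equation*}
    0 \longrightarrow \frac{S}{(d_1,\dots,d_i)+J_G^s : d_{i+1}}(-2) \longrightarrow \frac{S}{(d_1,\dots,d_i)+J_G^s} \longrightarrow \frac{S}{(d_1,\dots,d_{i+1})+J_G^s} \longrightarrow 0,
\end{equation*}
apply Lemma~\ref{Lemma1.PD} to rewrite the colon ideal as $\bigl((d_1,\dots,d_i):d_{i+1}\bigr)+J_G^{s-1}$, and then via Remark~\ref{Rem1.MCI} and the Intersection Property of Lemma~\ref{Lemma2.TL} identify the extra generators $\langle f_{lk}\rangle$ as the binomial edge ideal $J_{K_n}$ of a complete graph on the neighbourhood of $k_0$ (or of $k_1$). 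Setting $I=(d_1,\dots,d_i)+J_G^{s-1}$ and $J=J_{K_n}$, I split into Case~1 ($J\subseteq I$, handled directly by the outer induction on $s$ and Lemma~\ref{Lemma1.Reg}(a)) and Case~2 ($I+J\supsetneq I$), where I analyse $I\cap J$ through the sequence $0\to S/(I\cap J)\to S/I\oplus S/J\to S/(I+J)\to 0$ and invoke Theorem~\ref{Thm3.IJ} to get $\reg S/(I\cap J)=2+\sum_{j}s_{(j)}+1$.

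**The main obstacle.**
The genuine difficulty, and the only place where the $\mathcal{H}_m$ geometry really enters, is the analogue of Claim~\ref{Claim1}: identifying $I\cap J$ explicitly as $J*\langle x_{k_0},y_{k_0},J_{\mathcal{P}}\rangle$ so that Theorem~\ref{Thm3.IJ} applies and yields the correct $+1$ shift. Here the colon operation at the edge $d_{i+1}$ may fill in a clique either around $k_0$ or around the degree-$3$ vertex $k_1$, so I expect to handle the two subcases (the completing edge lying in $N(k_0)$ versus $N(k_1)$) separately; the $K_{1,3}$ hanging off $k_1$ is precisely what contributes the extra internal vertex and hence the $+1$ in the final regularity formula. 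The containment $I\cap J\supseteq J*\langle x_{k_0},y_{k_0},J_{\mathcal{P}}\rangle$ is the Plücker-type syzygy $x_l f_{k k_0}-x_k f_{l k_0}=x_{k_0}f_{lk}$ as before, while the reverse containment again uses the disjointness $(E(H_1)\cup E(H_2))\cap E(H_3)=\emptyset$ to force every coefficient into $IJ\subseteq J*\langle x_{k_0},y_{k_0},J_{\mathcal{P}}\rangle$. Once this claim is secured for both subcases, the regularity bookkeeping is identical to Theorem~\ref{Thm4.T1}, and applying Lemma~\ref{Lemma1.Reg}(a) to the displayed sequence closes the induction.
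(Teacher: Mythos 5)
Your proposal follows essentially the same route as the paper's proof: outer induction on $s$ with descending induction on $i$, the colon identity via Lemma \ref{Lemma1.PD} and Remark \ref{Rem1.MCI}, the split into the two cases with $I=(d_1,\ldots,d_i)+J_G^{s-1}$ and $J=J_{K_n}$, the claim $I\cap J = J*\langle x_{k_0},y_{k_0},J_{\mathcal{P}}\rangle$ proved exactly as Claim \ref{Claim1}, and then Theorem \ref{Thm3.IJ} together with Lemma \ref{Lemma1.Reg} to close the induction, with the $i=n-1$ case isolated in Lemma \ref{Lemma4.T2dn} (your analogue of Lemma \ref{Lemma4.T1p2}). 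The only deviations are cosmetic: the paper's choice of ordering from Theorem \ref{Thm2.T2} confines the $N(k_1)$-clique entirely to that last colon $(d_1,\ldots,d_{n-1}):d_n$, so no separate $N(k_0)$-versus-$N(k_1)$ subcase split is needed in Case 2, and your intermediate value $\reg S/(I\cap J)=2+\sum_{j}s_{(j)}+1$ differs slightly from the paper's $2+\sum_{j=1}^{m+1}s_{(j)}$, a discrepancy that does not affect the final regularity computation since both are dominated by $\reg S/I$ in the regularity lemma.
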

\begin{proof} The proof is by induction on $s$. For $s=1$, the statement follows from  Lemma \ref{Thm3.T2}. We may assume the statement for $s\geq 2$, and the assertion holds for $s-1$ (take $d_1,\ldots,d_n$ to be in the same order as in Theorem \ref{Thm2.T2}).  Now, the proof for the statement $s$ is by descending induction on $i$. For $i=n-1$ the statement holds by Lemma \ref{Lemma4.T2dn}. Next, assume that the assertion holds for $i+1$. Consider the short exact sequence (\ref{eq:T1}). By induction hypothesis on $i$ it follows that $\reg {S}/{((d_1,\ldots,d_{i+1})+J_{G}^{s})} = 2s + \sum_{j=1}^{m+1}{s_{(j)}}+1$. By the similar argument as in Theorem \ref{Thm4.T1} we get $((d_1,\ldots,d_{i}):d_{i+1})+J_{G}^{s-1} =(d_1,\ldots,d_{i})+J_{G}^{s-1}+J_{K_{n}}$, for some $n < m$. 
Set $I= (d_1,\ldots,d_{i})+J_{G}^{s-1}$ and $J=J_{K_{n}}$. Consider the two cases: 

\vspace{2mm}

\textbf{Case 1.}
 If  $((d_1,\ldots,d_{i}):d_{i+1})+J_{G}^{s-1} = I$, then from induction hypothesis on $s$ it follows that $\reg {S}/{I} = 2(s-1) + \sum_{j=1}^{m+1}{s_{(j)}}+1$. Applying Lemma \ref{Lemma1.Reg}(a)  to the short exact sequence (\ref{eq:E1}) yields that $$\reg \frac{S}{(d_1,\ldots,d_{i})+J_{G}^{s}}= 2s + \sum_{j=1}^{m+1}{s_{(j)}}+1.$$ 

\vspace{2mm}

\textbf{Case 2.}
If $((d_1,\ldots,d_{i}):d_{i+1})+J_{G}^{s-1} =I + J$. Consider the  short exact sequence (\ref{eq:IJ}).
By induction hypothesis that $\reg {S}/{I} = 2(s-1) + \sum_{j=1}^{m+1}{s_{(j)}}+1$. Since $J$ is a complete graph, from Remark \ref{rem1.Reg}(a) it follows that $\reg {S}/{J} = 1$.  \\
\textbf{Claim.} $I \cap J = J\cdot( x_{k_{0}},y_{k_{0}},J_{\mathcal{P}} )$, where $\mathcal{P}$ denotes an induced subgraph of $G$ on $V(G) \setminus k_0$ and $k_0$ denote the center of $G$. 
The proof is similar to the proof of Claim \ref{Claim1} in Theorem \ref{Thm4.T1}.
Now, From Theorem \ref{Thm3.IJ} we obtain that $\reg S/(I\cap J)= 2+ \sum_{j=1}^{m+1}{s_{(j)}}$. Then, from the short exact sequence (\ref{eq:IJ}) it follows that $\reg{{S}/{(I+J)}} = 2(s-1)+ \sum_{j=1}^{m+1}{s_{(j)}}+1$, for all $s\geq 2$. Thus, applying Lemma \ref{Lemma1.Reg}(a) to the  short exact sequence (\ref{eq:E1}) yields  $$\reg{\frac{S}{(d_1,\ldots,d_{i})+J_{G}^{s}}} = 2s + \sum_{j=1}^{m+1}{s_{(j)}}+1,  \text{ for $s \geq 2$. } $$
\end{proof}

To complete the proof of the Theorem \ref{Thm4.T2}, we need to prove the following lemma.

\begin{lemma} \label{Lemma4.T2dn}
Under the assumption in Theorem \ref{Thm4.T2}, and for all $s \geq 2$, we have
$$\reg \frac{S}{(d_1,\ldots,d_{n-1})+ d_{n}^{s}} = 2s + \sum_{j=1}^{m+1}{s_{(j)}}+1.$$
\end{lemma}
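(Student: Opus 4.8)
The plan is to reproduce the argument of Lemma~\ref{Lemma4.T1p2} with the $\mathcal{T}_m$-data replaced by the corresponding $\mathcal{H}_m$-data. Write $e=\{k_0,k_1\}$ for the bridge joining the two branch vertices, so that $d_n=f_{k_0k_1}$ and $(d_1,\dots,d_{n-1})=J_{G\setminus e}$. The whole computation runs through the short exact sequence
$$0 \longrightarrow \frac{S}{(d_1,\dots,d_{n-1}):d_n^{s}}(-2s) \longrightarrow \frac{S}{(d_1,\dots,d_{n-1})} \longrightarrow \frac{S}{(d_1,\dots,d_{n-1})+d_n^{s}} \longrightarrow 0,$$
together with the $d$-sequence identity $(d_1,\dots,d_{n-1}):d_n^{s}=(d_1,\dots,d_{n-1}):d_n$, valid for all $s\ge 1$ by iterating the defining colon relation of a $d$-sequence. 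Thus the leftmost module is, up to the shift $(-2s)$, independent of $s$, and the proof reduces to pinning down the regularities of the two outer modules at $s=1$.

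For $N=S/(d_1,\dots,d_{n-1})$: deleting the bridge $e$ splits $G$ into $A\sqcup B$, where $A$ is the $\mathcal{T}_{m-1}$-graph consisting of $k_0$ and its $m-1$ path-arms, and $B$ is the single path obtained from $k_1$ and its two arms. Viewing a disjoint union as a (vacuous) gluing at free vertices, Remark~\ref{Rem1.GUG} and Remark~\ref{Rem.FF} give
$$\reg N=\reg\frac{S}{J_A}+\reg\frac{S}{J_B}=\Big(\textstyle\sum_{i=1}^{m-1}s_{(i)}+2\Big)+\big(s_{(m)}+s_{(m+1)}+2\big)=\sum_{j=1}^{m+1}s_{(j)}+4.$$
For the colon module I would use Remark~\ref{Rem1.MCI}: since $e$ is a bridge, $(d_1,\dots,d_{n-1}):d_n=J_{(G\setminus e)_e}$, and the graph deformation $(G\setminus e)_e$ turns the neighbourhoods of $k_0$ and of $k_1$ into cliques. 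Concretely it is $C_1\sqcup C_2$, where $C_1$ is $K_m$ (on $k_0$ and its former $m-1$ neighbours) carrying the $m-1$ arms, and $C_2$ is the triangle $K_3$ (on $k_1$ and its two former neighbours) carrying the two arms. Both are flower-free connected block graphs, so Remark~\ref{Rem.FF} (with Remark~\ref{Rem1.GUG}) yields
$$\reg\frac{S}{(d_1,\dots,d_{n-1}):d_n}=\Big(\textstyle\sum_{i=1}^{m-1}s_{(i)}+1\Big)+\big(s_{(m)}+s_{(m+1)}+1\big)=\sum_{j=1}^{m+1}s_{(j)}+2.$$

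Feeding this back, the left module of the sequence has regularity $\sum_{j=1}^{m+1}s_{(j)}+2+2s$, so its contribution to Lemma~\ref{Lemma1.Reg}(c) is $\sum_{j=1}^{m+1}s_{(j)}+2s+1$, while $\reg N=\sum_{j=1}^{m+1}s_{(j)}+4$. For every $s\ge 2$ one has $2s+1>4$, hence the two regularities differ and the equality clause of Lemma~\ref{Lemma1.Reg}(c) applies, giving $\reg S/\big((d_1,\dots,d_{n-1})+d_n^{s}\big)=2s+\sum_{j=1}^{m+1}s_{(j)}+1$, as claimed; for $s=1$ the statement instead reduces to Lemma~\ref{Thm3.T2}.

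The step I expect to be the main obstacle is the exact combinatorial identification of $(d_1,\dots,d_{n-1}):d_n$ as $J_{C_1\sqcup C_2}$ through the deformation of Remark~\ref{Rem1.MCI}, and then the internal-vertex bookkeeping that places the colon module exactly $2$ below $N$ in regularity. It is precisely this gap of $2$ (rather than, say, $1$) that produces the strict inequality $2s+1>4$ for $s\ge 2$, and hence the equality in the regularity lemma; mis-reading either the clique structure of $(G\setminus e)_e$ or the count of internal vertices of $C_1,C_2$ would break the numerology.
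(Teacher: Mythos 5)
Your proposal is correct, and it is essentially the paper's own argument: the paper proves Lemma \ref{Lemma4.T2dn} by recording the regularities of $S/J_G$ and $S/(d_1,\ldots,d_{n-1})$ and then invoking the proof of Lemma \ref{Lemma4.T1p2}, which is exactly your short exact sequence together with the $d$-sequence identity $(d_1,\ldots,d_{n-1}):d_n^{s}=(d_1,\ldots,d_{n-1}):d_n$, the identification of the colon ideal via Remark \ref{Rem1.MCI}, and the equality clause of Lemma \ref{Lemma1.Reg}(c). One substantive point deserves record: for the middle module the paper asserts $\reg S/(d_1,\ldots,d_{n-1}) = 2+\sum_{j=1}^{m+1}s_{(j)}$, whereas you compute $4+\sum_{j=1}^{m+1}s_{(j)}$ --- and your count is the right one. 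Indeed, with the ordering of Theorem \ref{Thm2.T2} one has $(d_1,\ldots,d_{n-1})=J_{G\setminus e}$ for the bridge $e=\{k_0,k_1\}$, and $G\setminus e$ is the disjoint union of a $\mathcal{T}_{m-1}$-type spider (regularity $2+\sum_{i=1}^{m-1}s_{(i)}$ by Remark \ref{rem1.Reg}(c), using $m\geq 3$) and a path on $s_{(m)}+s_{(m+1)}+3$ vertices (regularity $s_{(m)}+s_{(m+1)}+2$); for instance, for $G_2\in\mathcal{H}_4$ of Figure \ref{fig:Hm} this graph is $K_{1,3}\sqcup P_3$, of regularity $2+2=4$, not $2$. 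The discrepancy is harmless for the statement, precisely for the reason you isolate: your identification of $(G\setminus e)_e$ as $K_m$ and $K_3$ carrying the arms gives the colon module regularity $2+\sum_{j=1}^{m+1}s_{(j)}$ (this matches what the paper's ``similar'' argument would produce), so the shifted left module has regularity $2s+2+\sum_{j=1}^{m+1}s_{(j)}$, and with the correct middle value $4+\sum_{j=1}^{m+1}s_{(j)}$ one still gets $\max\bigl\{2s+1+\sum_{j=1}^{m+1}s_{(j)},\,4+\sum_{j=1}^{m+1}s_{(j)}\bigr\}=2s+1+\sum_{j=1}^{m+1}s_{(j)}$ exactly when $s\geq 2$, which also explains the lemma's hypothesis $s\geq 2$. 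Your final sentence is worded slightly loosely --- the equality clause of Lemma \ref{Lemma1.Reg}(c) requires $\reg M\neq\reg N$, i.e.\ $2s+2\neq 4$, rather than $2s+1>4$ --- but since $2s+1>4$ implies both that condition and that the maximum is attained by the colon term for integer $s\geq 2$, the argument stands as written.
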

\begin{proof} 
From Lemma \ref{Thm3.T2} it follows that $\reg {S}/{J_{G}} = 2 + \sum_{j=1}^{m+1}s_{(j)}+1$. By using Theorem \ref{Thm2.T2} and Remark \ref{Rem1.GUG} one can obtain that  $\reg {S}/{(d_1,\ldots,d_{n-1})} = 2 + \sum_{j=1}^{m+1}s_{(j)}$. The rest of the proof is similar to the proof of Lemma \ref{Lemma4.T1p2}.
\end{proof}

\begin{remark}
Note that $H$-type graphs discussed in \cite{SZ} is a particular case of graphs we considered in Theorem \ref{Thm4.T2}. Hence we obtain  \cite[Theorem 4.7]{SZ} as one specific case of Theorem \ref{Thm4.T2}. 
\end{remark}

\subsection*{Conclusion} The proof of Theorem~\ref{P1.2} follows from Theorems \ref{Thm4.T1} and \ref{Thm4.T2}.

\vspace{2mm}

We conclude the article with the following question:

\begin{question}
Classify all finite simple graphs such that their edge binomials form a $d$-sequence.
\end{question}

\end{document}